\newcommand*{\affaddr}[1]{#1} 
\newcommand*{\affmark}[1][1]{\textsuperscript{#1}}
\author{Julie Clutterbuck\affmark[a]\thanks{email: Julie.Clutterbuck@monash.edu} and James Larsen-Scott\affmark[a]\thanks{Corresponding author email: james.larsen-scott@monash.edu}\\
	 \affaddr{\small\affmark[a]School of Mathematics, Monash University, 9 Rainforest Walk,
	 	Melbourne, VIC 3800,
	 	Australia}}
\newcommand*{\bchi}{\mbox{\Large$\chi$}}
\newtheorem{thm}{Theorem}
\newtheorem{ques}{Question}
\newtheorem{lem}{Lemma}
\newtheorem{prop}{Proposition}
\newtheorem{cry}{Corollary}
\newtheorem*{remark}{Remark}
\date{September 6, 2023}
\begin{document}
	\title{Local Spectral Optimisation for Robin Problems with Negative Boundary Parameter on Quadrilaterals}
	\maketitle
	\begin{abstract}
		We investigate the Robin eigenvalue problem for the Laplacian with negative boundary parameter on quadrilateral domains of fixed area. In this paper, we prove that the square is a local maximiser of the first eigenvalue with respect to the Hausdorff metric. We also provide asymptotic results relating to the optimality of the square for extreme values of the Robin parameter.
	\end{abstract}
	{\let\thefootnote\relax\footnote{The work of James Larsen-Scott was supported via an Australian Government Research Training Program Scholarship. The research of Julie Clutterbuck was supported by grant  
			DP220100067 of the Australian Research Council.}}
	\section{Introduction}
	Consider the Robin eigenvalue problem for the Laplacian
	\begin{subequations}
		\begin{align}
			-\Delta u=\lambda u \quad \text{in } \Omega, \label{Eq:robA}\\
			\frac{\partial u}{\partial \nu}+\alpha u=0 \quad \text{on }\partial\Omega, \label{Eq:robB}
		\end{align}
	\end{subequations}
	with outward facing unit normal $\nu,$ eigenvalue $\lambda$ and Robin parameter $\alpha,$ for a domain $\Omega$ of $\mathbb{R}^N,N\geq 2.$ The Robin problem can be seen as an extension of the well-studied Neumann and Dirichlet boundary conditions, where $\alpha=0$ corresponds to the Neumann problem, and $\alpha=+\infty$ corresponds to the Dirichlet problem. The eigenvalues associated with the problem (\ref{Eq:robA}-\ref{Eq:robB}) form a non-decreasing sequence ${\lambda_k(\Omega,\alpha)}_{k=1}^{\infty}.$ The first eigenvalue $\lambda_1$ is simple and given by the variational characterisation
	\begin{align}
		\lambda_1(\Omega,\alpha)=\min_{\substack {u\in H^1(\Omega)\\ u\neq 0}}\frac{\int_{\Omega}|\nabla u|^2 dx+\alpha\int_{\partial\Omega}|u|^2d\sigma}{\int_{\Omega}|u|^2 dx},\label{eq:eig1}
	\end{align}
	where we understand the boundary integral in the sense of traces where necessary.\\
	This paper concerns optimisation problems for the first eigenvalue of the Laplacian. In particular we are interested in the following question
	\begin{ques}
		Let $\Lambda$ be a collection of domains with fixed area. For each fixed $\alpha\in\mathbb{R}$ does there exist $\Omega_0\in\Lambda$ such that for each $\Omega\in\Lambda$ we have $|\lambda_1(\Omega,\alpha)|\geq |\lambda_1(\Omega_0,\alpha)|$?
	\end{ques}
	A famous result of this type is the Faber-Krahn inequality: among domains of fixed dimension and area, the first eigenvalue of (\ref{Eq:robA}) with Dirichlet boundary condition $u=0$ on $\partial\Omega$ is minimized by the ball. This was initially conjectured by Lord Rayleigh \cite{RayleighJohnWilliamStruttBaron1842-19191878Ttos} before later being proved independently by Faber  \cite{faber1923beweis} and Krahn \cite{krahn1925rayleigh}.\\
	An analogous result holds for the Robin problem (\ref{Eq:robA}-\ref{Eq:robB}) with positive Robin parameter $\alpha>0$.
	This was established in two dimensions by Bossel \cite{bossel1986membranes}, before Daners later extended this work to higher dimensions \cite{DanersDaniel2006AFif} and alongside Bucur extended to the p-Laplacian \cite{BucurDorin2009Aaat}. For recent related results see \cite{alvino2023talenti,BucurDorin2018TqFi}.\\
	In the case of negative Robin parameter $\alpha<0,$ the first eigenvalue is negative $(\lambda_1<0)$ and the relevant spectral optimisation question is whether there is a domain $\Omega_0$ which maximises the first eigenvalue. Bareket conjectured (at least in two dimensions, see \cite{bareket1977isoperimetric}) that a reverse Faber-Krahn type inequality holds, where the ball maximises the first eigenvalue among domains of a given volume. However, this was shown to be false in general by Freitas and Krej\v ci\v r\'ik \cite{freitas2015first} who showed for sufficiently large negative $\alpha$ that a spherical shell has larger first eigenvalue. In the same paper, Freitas and Krej\v ci\v r\'ik also showed that in 2 dimensions a reverse Faber-Krahn equality does hold, where the ball is still the maximiser, provided that the absolute value of $\alpha$ is sufficiently small. It remains an open question to consider whether the disk is a maximizer once one restricts to simply connected domains in two dimensions. \\
	\begin{ques}
		Let $\Lambda$ be the collection of $N$-gons with fixed area. Does there exist $\Omega_0\in\Lambda$ such that for each $\alpha\in\mathbb{R}$ and $\Omega\in\Lambda$ we have $|\lambda_1(\Omega,\alpha)|\geq |\lambda_1(\Omega_0,\alpha)|$? If so, is $\Omega_0$ the regular $N$-gon?
	\end{ques}
	In the case of the Dirichlet problem, for triangles and quadrilaterals, P\'olya \cite{PolyaG1951Iiim} proved that indeed the equilateral triangle and square respectively minimize the first eigenvalue. However, for $N\geq 5$ the problem is still open in the Dirichlet case. See \cite{bogosel2022polygonal,indrei2022first} for recent developments. In the case of rectangles, for both positive and negative $\alpha$ the square is the optimiser, as can be seen through a separation of variables argument (see for instance the survey paper \cite{laugesen2019robin}.) However, for general quadrilaterals and triangles this problem is still open for both positive and negative $\alpha$ (this problem for triangles with $\alpha>0$ is conjecture 6.4 in \cite{henrot2017shape}).\\
	Recent work by Krej\v ci\v r\'ik et al. in \cite{krejvcivrik2023reverse} has made some progress on this problem for trianglar domains when the Robin parameter $\alpha$ is negative. The first result of this paper showed that for sufficently small $\alpha$ the equilateral triangle is a local maximiser. The second result showed, that for a fixed triangle of given area, if $\alpha$ is sufficiently small or large in absolute value, the  equilateral triangle of the same area has larger first eigenvalue. In this paper we follow a similar approach to Krej\v ci\v r\'ik et al. and obtain similar results for the Robin problem with negative boundary parameter on quadrilateral domains.\\
	Our main result is the following local result.
	\begin{thm}\label{thm1}
		The square is a local maximiser (with respect to the Hausdorff metric up to isometry) of the first eigenvalue of the problem (\ref{Eq:robA}-\ref{Eq:robB}) with negative boundary parameter $\alpha$ in the family of quadrilaterals of fixed area.
	\end{thm}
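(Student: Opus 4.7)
The plan is to adapt the strategy of Krej\v ci\v r\'ik et al.\ \cite{krejvcivrik2023reverse} for triangles to the quadrilateral setting. First I identify each quadrilateral near the unit-area square with the eight coordinates of its ordered vertices $(P_1,P_2,P_3,P_4)\in\mathbb{R}^8$; proximity in the Hausdorff metric up to isometry translates into vertex proximity after normalising by placing the centroid at the origin and orienting one diagonal along a fixed direction. Imposing the area constraint cuts the parameter space down to four dimensions, and since $\lambda_1$ is simple on the square it depends real-analytically on these parameters in a small neighbourhood. It therefore suffices to show that the Hessian of $\lambda_1$ at the square is negative definite on this reduced parameter space.

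Second, to compute the shape derivatives of $\lambda_1$ I would derive the analogue for polygons of the Hadamard-type formula used in \cite{krejvcivrik2023reverse}, writing each vertex displacement as a smooth vector field supported near the corresponding corner. The first eigenfunction on the square is simple, positive, and separates as $u(x,y)=f(x)f(y)$, where $f$ solves the one-dimensional Robin problem on the edge, so $u$ inherits the full $D_4$ symmetry of the square. Since the first-order formula is a linear functional of $u^2$ and $|\nabla u|^2$ paired against a normal velocity, this symmetry forces it to vanish on every area-preserving perturbation, identifying the square as a critical point.

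Third and most substantially, I would compute the Hessian of $\lambda_1$ at the square on the four-dimensional space of area-preserving shape modes. Under the $D_4$ action this space decomposes into simpler irreducible components --- a rhombic mode (stretching along a diagonal), a rectangular mode (anisotropic stretching along the coordinate axes), and a two-dimensional trapezoidal/shearing family --- which makes the Hessian block-diagonal. On each block the Hessian reduces to a short list of explicit one-dimensional integrals in $f$, $f'$, and $\alpha$, obtained by evaluating the second-order Hadamard-type formula against $u$.

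The main obstacle is to verify that every such block is \emph{strictly} negative for each $\alpha<0$. This requires careful bookkeeping of the corner contributions in the second-order shape formula, because moving a vertex simultaneously perturbs two adjacent edges and the enclosed angle; and it requires sharp sign estimates for the resulting one-dimensional integrals. The latter can be obtained by repeatedly integrating by parts and using the one-dimensional Robin equation $-f''=\lambda_1 f$ together with the endpoint condition $\pm f'(\text{endpoint})=-\alpha f(\text{endpoint})$ to reduce every entry to an expression whose sign is visibly determined by $\alpha<0$. The asymptotic regimes $\alpha\to 0^-$ and $\alpha\to -\infty$, where explicit expansions for $f$ are available, provide a useful sanity check, and monotonicity in $\alpha$ should complete the argument across the whole negative range.
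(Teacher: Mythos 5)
Your skeleton --- a four-parameter family after quotienting by isometries and the area constraint, criticality of the square by symmetry, negative-definiteness of the Hessian --- is the same as the paper's, and your symmetry argument for the vanishing of the first derivative is sound (it is what Lemma \ref{lamzero} does, via evenness of $\psi_0$ in $x$ and $y$ and $\|\partial_1\psi_0\|=\|\partial_2\psi_0\|$); the block structure you predict from the $D_4$ decomposition likewise matches the paper's Hessian (a $2\times2$ shear block plus two scalar blocks). Your technical route differs in that you compute derivatives by Hadamard-type shape formulas with corner-supported vector fields, whereas the paper pulls the problem back to the fixed square by a piecewise-linear map and differentiates the transformed quadratic form (\ref{explexpl}); the latter sidesteps the regularity issues of shape differentiation at moving vertices and gives analyticity for free from Kato's theory of type (B) holomorphic families.

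The genuine gaps are in your third step. First, the Hessian entries are \emph{not} a short list of explicit one-dimensional integrals in $f$, $f'$ and $\alpha$: second-order perturbation theory produces in each entry the term $2\bigl(\lambda_0\langle\psi_0^{v_1},\psi_0^{v_2}\rangle-\hat h_\alpha[\psi_0^{v_1},\psi_0^{v_2}]\bigr)$ involving the derivative of the \emph{eigenfunction}, which is not explicitly computable. On the diagonal this term is $\le 0$ by the variational characterisation and can be discarded (Lemma \ref{eigprop}(IV)), but in the off-diagonal entry $\lambda_0^{a_1,a_2}$ of the shear block it can neither be discarded nor computed; the paper controls the determinant of that block by a Cauchy--Schwarz inequality for the positive semi-definite form $\mathscr{G}[f,g]=\hat h_\alpha[f,g]-\lambda_0\langle f,g\rangle$, and your plan contains no mechanism for this. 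Second, strict negativity for \emph{every} $\alpha<0$ is precisely the hard point, and ``monotonicity in $\alpha$ should complete the argument'' is not a proof. In the paper everything reduces to the single inequality $\|\nabla\psi_0\|^2_{L^2(\Omega_0)}+\alpha C\|\psi_0\|^2_{L^2(\partial\Omega_0)}<0$ for constants $C\in(0,1)$ (Lemma \ref{extre}), proved by combining $\lambda_0=\|\nabla\psi_0\|^2+\alpha\|\psi_0\|^2_{L^2(\partial\Omega_0)}$ with $\partial\lambda_0/\partial\alpha=\|\psi_0\|^2_{L^2(\partial\Omega_0)}$ and the explicit transcendental relation $g\bigl(L\sqrt{-\lambda_0/2}\bigr)=-\alpha L$, $g(t)=t\tanh t$, after which the required bound is a manifestly-less-than-one ratio of $\tanh$ and $\mathrm{sech}^2$ terms. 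You would need to supply an argument of comparable explicitness before the negative-definiteness claim is established.
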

	Notice that unlike the equivalent result for triangles in \cite{krejvcivrik2023reverse} there is no restriction on the size of $\alpha.$ The method of proof follows that in \cite{krejvcivrik2023reverse}: we parametrise an arbitrary quadrilateral of fixed area $\Omega$ in terms of 4 parameters. We replace the problem (\ref{Eq:robA}-\ref{Eq:robB}) on $\Omega$ with an isospectral problem on the (rotated) square $\Omega_0$ whose associated quadratic form depends analytically upon the 4 parameters. By computing the first and second order derivatives with respect to the parameters of the eigenvalue, it can be shown that the square is a local maximiser in this parameter space. The argument takes advantage of the fact that the solution to (\ref{Eq:robA}-\ref{Eq:robB}) is known explicitly on the square. \\
	Our second result shows that a reverse Faber-Krahn inequality holds at least asymptotically for large or small $\alpha.$
	\begin{thm}\label{thm2}
		Let $\Omega_0$ be the square, and let $\Omega$ be any quadrilateral with the same area.  
		Then there exists
		$\alpha_2\leq\alpha_1<0$ such that for all $\alpha\in\left(-\infty,\alpha_2\right]\cup\left[\alpha_1,0\right)$ we have $\lambda_1(\Omega,\alpha)\leq\lambda_1(\Omega_0,\alpha).$
	\end{thm}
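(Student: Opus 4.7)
The proof splits by the size of $|\alpha|$. For $\alpha$ near $0^{-}$ I would use the small-$\alpha$ expansion about the Neumann problem, where $\lambda_1(\Omega,0)=0$ with constant eigenfunction. Regular analytic perturbation theory (or, more directly, testing the Rayleigh quotient (\ref{eq:eig1}) with $u\equiv 1$ together with a matching lower bound) yields
\begin{equation*}
\lambda_1(\Omega,\alpha)=\alpha\frac{|\partial\Omega|}{|\Omega|}+O(\alpha^2).
\end{equation*}
Applying this to both $\Omega$ and $\Omega_0$, the difference $\lambda_1(\Omega,\alpha)-\lambda_1(\Omega_0,\alpha)$ has linear coefficient $\alpha(|\partial\Omega|-|\partial\Omega_0|)/|\Omega|$. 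The isoperimetric inequality for quadrilaterals of fixed area gives $|\partial\Omega|\geq|\partial\Omega_0|$, with strict inequality unless $\Omega$ is isometric to a square (in which case the theorem is immediate). Assuming the strict inequality and $\alpha<0$, the linear term is strictly negative, so choosing $\alpha_1<0$ small enough in absolute value to dominate the $O(\alpha^2)$ remainder gives $\lambda_1(\Omega,\alpha)\leq\lambda_1(\Omega_0,\alpha)$ throughout $[\alpha_1,0)$.

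For the strong-coupling regime $\alpha\to-\infty$ I would invoke the polygonal corner asymptotic for the Robin Laplacian: if $\omega_{\min}$ denotes the smallest interior angle of $\Omega$, then
\begin{equation*}
\lambda_1(\Omega,\alpha)=-\frac{\alpha^2}{\sin^2(\omega_{\min}/2)}+o(\alpha^2)\qquad\text{as }\alpha\to-\infty,
\end{equation*}
reflecting the concentration of the ground state near the sharpest corner, modelled to leading order by the Robin Laplacian on an infinite angular sector. If $\Omega$ is not a rectangle, the angle sum $2\pi$ forces $\omega_{\min}<\pi/2$; combined with $\sin^2(\pi/4)=\tfrac12$ this gives $1/\sin^2(\omega_{\min}/2)>2$, so the $\alpha^2$-coefficient of $\lambda_1(\Omega,\alpha)$ is strictly smaller than that of $\lambda_1(\Omega_0,\alpha)=-2\alpha^2+o(\alpha^2)$, and the inequality holds on $(-\infty,\alpha_2]$ for $\alpha_2$ sufficiently negative. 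If instead $\Omega$ is a non-square rectangle the corner argument is inconclusive (all angles are $\pi/2$), and I would appeal to the classical separation-of-variables result recalled in the introduction, by which the square minimises $\lambda_1(\cdot,\alpha)$ among rectangles of fixed area for every $\alpha<0$; in that case $\alpha_2$ may be taken arbitrarily.

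The main obstacle is establishing (or invoking) the sharp corner asymptotic with enough quantitative control: the error must be genuinely $o(\alpha^2)$ so that comparing $\alpha^2$-coefficients is decisive, which ultimately reduces to analysing the Robin Laplacian on an infinite sector and a localisation/bracketing argument recasting the polygonal spectrum in terms of local corner contributions. The remaining ingredients --- the near-Neumann perturbation, the quadrilateral isoperimetric inequality, and the separation-of-variables for rectangles --- are either standard or classical, and the only subtlety in stitching them together is keeping the thresholds $\alpha_1$ and $\alpha_2$ allowed to depend on $\Omega$, which the statement of the theorem already permits.
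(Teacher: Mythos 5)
Your proposal is correct. The strong-coupling half coincides with the paper's argument: Corollary \ref{sectcry2} invokes the Levitin--Parnovski corner asymptotics $\lambda_1(\Omega,\alpha)=-\alpha^2\max_i\csc^2(\theta_i/2)+o(\alpha^2)$, notes that a non-rectangular quadrilateral (angle sum $2\pi$) must have an interior angle strictly below $\pi/2$ so that the coefficient exceeds the square's value $2$, and falls back on the separation-of-variables result for rectangles --- exactly your treatment, including the caveat about error control being $o(\alpha^2)$. For the weak-coupling half you take a genuinely different route. The paper's Proposition \ref{smalph} tests the pulled-back form $\hat h_{\alpha,a_1,a_2,c,S_1}$ on the square's own eigenfunction $\psi_0$ and reduces the claim to an explicit inequality $g(\alpha)<z(a_1,a_2,c,S_1)$, where $z\ge 0$ is a geometric functional vanishing only at the square and $g(\alpha)=-\|\nabla\psi_0\|^2_{L^2(\Omega_0)}/(\alpha\|\psi_0\|^2_{L^2(\partial\Omega_0)})\to 0$ as $\alpha\to 0^-$; that limit is computed from $d\lambda_0/d\alpha|_{\alpha=0}=|\partial\Omega_0|/|\Omega_0|$, so the same first-order information you exploit enters there too, but packaged through the transplantation machinery rather than through the isoperimetric inequality. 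Your version --- trial function $u\equiv 1$, analyticity of $\lambda_1$ in $\alpha$ at $0$, and the classical fact that the square minimises perimeter among quadrilaterals of fixed area --- is shorter and more self-contained; the paper's version buys reusability, since the lower bounds it establishes for the weighted perimeter $l(a_1,a_2,c,S_1)$ are recycled verbatim in Proposition \ref{quadasyp} to prove Theorem \ref{thm3}. Both arguments are sound; the only point requiring care in yours is that the $O(\alpha^2)$ remainder, and hence $\alpha_1$, depends on $\Omega$, which the statement of the theorem explicitly permits.
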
 
	Note that the constants $\alpha_1,\alpha_2$ depend upon $\Omega.$ This theorem is proved for small $\alpha$ via trial function arguments and for large $\alpha$ via an asymptotic result of Levitin and Parnovski \cite{levitin2008principal}. \\
	Our third result shows that a reverse Faber-Krahn also holds in the case of quadrilaterals far from the square in terms of Hausdorff distance.
	\begin{thm}\label{thm3}
		Let $\Omega_0$ be the square, let $\Omega$ be a quadrilateral with the same area and fix $\alpha<0.$ Then there exists a constant $C$ depending only upon $\alpha$ such that if $d_H(\Omega,\Omega_0)>C$ we have $\lambda_1(\Omega,\alpha)\leq\lambda_1(\Omega_0,\alpha).$
	\end{thm}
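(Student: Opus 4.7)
The plan is to combine the variational characterisation (\ref{eq:eig1}) with the constant trial function $u\equiv 1$ to bound $\lambda_1(\Omega,\alpha)$ from above in terms of the perimeter of $\Omega$, and then to argue geometrically that a large Hausdorff distance from the square forces a large perimeter. Since $\alpha<0$, a long perimeter drives the Robin quotient down, and once it dips below the fixed value $\lambda_1(\Omega_0,\alpha)$ we are done.

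The analytic step is immediate: substituting $u\equiv 1$ into (\ref{eq:eig1}) gives
\[
\lambda_1(\Omega,\alpha) \le \frac{\alpha\,|\partial\Omega|}{|\Omega|}.
\]
For the geometric step, let $D=\mathrm{diam}(\Omega)$ and note first that the two diametral vertices of the quadrilateral split $\partial\Omega$ into two polygonal arcs, each of length at least $D$ by the triangle inequality, so $|\partial\Omega|\ge 2D$. Next I would relate $D$ to $d_H$ by picking any $p\in\Omega$ and translating so that $p$ coincides with the centre $c_0$ of $\Omega_0$. Then $\Omega\subset B(c_0,D)$ and $\Omega_0\subset B(c_0,\sqrt{|\Omega|/2})$; since $c_0\in\Omega_0$ and $p\in\Omega$, the two one-sided distances are controlled directly: $\sup_{x\in\Omega}d(x,\Omega_0)\le D$ and $\sup_{y\in\Omega_0}d(y,\Omega)\le\sqrt{|\Omega|/2}$. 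Because this placement is realised by an isometry, the isometry-infimum in the Hausdorff distance obeys
\[
d_H(\Omega,\Omega_0) \le D + \sqrt{|\Omega|/2}.
\]

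Chaining the three inequalities, $d_H(\Omega,\Omega_0)>C$ forces $D>C-\sqrt{|\Omega|/2}$ and hence
\[
\lambda_1(\Omega,\alpha) \le \frac{2\alpha\bigl(C-\sqrt{|\Omega|/2}\bigr)}{|\Omega|}.
\]
Because $\alpha<0$ this right-hand side tends to $-\infty$ as $C$ grows, so the explicit threshold
\[
C(\alpha) := \sqrt{|\Omega_0|/2} + \frac{|\Omega_0|\,\lambda_1(\Omega_0,\alpha)}{2\alpha}
\]
(both summands positive, since $\lambda_1(\Omega_0,\alpha)/\alpha>0$) suffices to ensure $\lambda_1(\Omega,\alpha)\le\lambda_1(\Omega_0,\alpha)$. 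No step presents a genuine obstacle; the only point deserving a brief check is that the position-dependent Hausdorff bound passes through the isometry-infimum in the definition of $d_H$, and this is automatic because the bound is attained by an explicit isometric placement.
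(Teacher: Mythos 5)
Your proof is correct, and it reaches the conclusion by a more direct route than the paper. Both arguments are at heart the same constant-test-function estimate: the paper inserts $\mathds{1}$ into the transplanted form $\hat{h}_{\alpha,a_1,a_2,c,S_1}$ on $\Omega_0$ and obtains $\lambda_{a_1,a_2,c,S_1}\le\frac{\alpha}{2}\sum_{i,j}|\Gamma^{(i,j)}_{a_j,c,S_1}|/S_j$, then shows in Proposition \ref{quadasyp} that this weighted perimeter blows up as any of the parameters $a_1,a_2,c,S_1$ becomes extreme, and finally asserts (without detail) that $d_H(\Omega,\Omega_0)>C$ forces one of those parameter regimes. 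You instead work directly on $\Omega$ with $u\equiv 1$ in (\ref{eq:eig1}) to get $\lambda_1(\Omega,\alpha)\le\alpha|\partial\Omega|/|\Omega|$, and replace the parameter analysis by the elementary chain $|\partial\Omega|\ge 2\,\mathrm{diam}(\Omega)$ and $d_H(\Omega,\Omega_0)\le\mathrm{diam}(\Omega)+\sqrt{|\Omega_0|/2}$ (the latter from a single explicit isometric placement, which indeed dominates the infimum defining $d_H$). What your approach buys is an explicit threshold $C(\alpha)=\sqrt{|\Omega_0|/2}+|\Omega_0|\lambda_1(\Omega_0,\alpha)/(2\alpha)$ and a fully spelled-out geometric step, precisely the step the paper's one-line proof leaves implicit; it also bypasses the parametrisation entirely, so it would apply verbatim to arbitrary polygons. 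What the paper's route buys is reuse: the parameter-wise blow-up conditions (I)--(VI) of Proposition \ref{quadasyp} are of independent interest and feed into the surrounding asymptotic analysis, whereas your argument is tailored to the single statement of Theorem \ref{thm3}. The only points worth a sentence of justification in a final write-up are that the diameter of a quadrilateral is attained at two boundary points (so the two boundary arcs between them each have length at least $\mathrm{diam}(\Omega)$, giving $|\partial\Omega|\ge 2\,\mathrm{diam}(\Omega)$ even for non-convex quadrilaterals), and the sign bookkeeping when dividing by $\alpha<0$; both go through as you indicate.
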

	Here $d_H$ is the Hausdorff distance up to isometry.
	This theorem is proved using a test function argument in a similar way to our proof of Theorem \ref{thm2}.\\
	This paper is structured as follows; in Section 2 we provide the preliminary setup required, this involves the parametrisation of an arbitrary quadrilateral and construction of a transform that replaces the Robin problem on this quadrilateral with an isopectral problem on the square. We provide the explicit solution to the Robin problem on the square, and provide useful properties and results that will be used later. In Section 3 we compute the first and second order derivatives of the eigenvalues with respect to the geometric parameters, and prove Theorem \ref{thm1}. In Section 4 we utilise test function arguments to obtain sufficient conditions for $\lambda_1(\Omega)\leq \lambda_1(\Omega_{0}),$ and as a consequence prove Theorem \ref{thm2} and \ref{thm3}. Section 5 is an appendix containing further details of nasty computations required in the proof of Theorem \ref{thm1}.
	\section{Preliminary Setup}
	In this section we follow  Krej\v ci\v r\'ik et al's approach \cite{krejvcivrik2023reverse}. We parametrise an aribtrary quadrilateral of fixed area 2S, then show that the Robin spectral problem on arbitrary quadrilaterals can be reduced to the spectral problem on the square for an associated family of operators dependent upon the parametrisation parameters.
	\subsection{Geometric Setup}
	We provide a parametrisation of a general quadrilateral as follows. 
	Let $c>0,\ a_1,a_2\in\mathbb{R}, \ S_1,S_2>0,$ with $S_1+S_2=2S>0.$ Let $\Omega_{a_1,a_2,c,S_1}$ be the quadrilateral with area $2S$ formed by connecting, in order, the vertices $(-c,0),\ (a_1,b_1),\ (c,0),\ (a_2,-b_2),$ where $b_i=S_i/c.$ The choice $(a_0,c_0)=(0,\sqrt{S})$ leads to the (rotated) square with side length $\sqrt{2S},$ which we shall denote as $\Omega_0,$ when $(a_1,a_2,c,S_1)=(a_0,a_0,c_0,S).$\\
	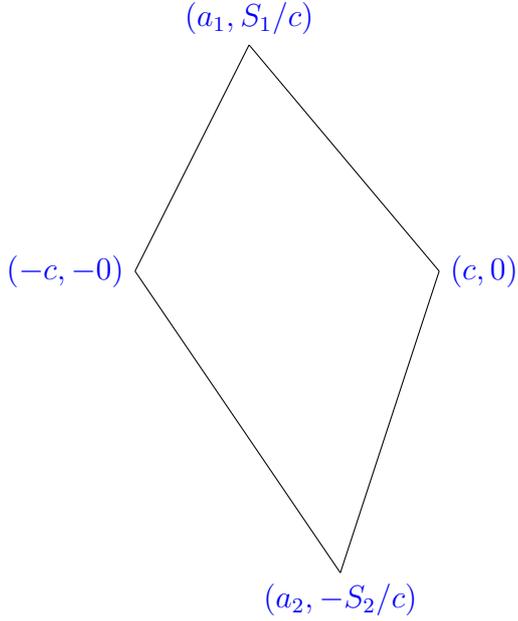
\begin{figure}[H]
		\begin{tikzpicture}
			\draw (2,0) -- (3.5,3) node[anchor=south,blue] {$(a_1,S_1/c$)};
			\draw (3.5,3) -- (6,0) node[anchor=west,blue] {$(c,0$)};
			\draw  (6,0) -- (4.7,-4) node[anchor=north,blue] {$(a_2,-S_2/c$)};
			\draw  (4.7,-4) -- (2,0) node[anchor=east,blue] {$(-c,-0$)};
		\end{tikzpicture}
		\caption{Parametrisation of a Quadrilateral}
	\end{figure}
	We note that by construction the areas of the quadrilaterals are constant and given by
	\begin{align*}
		|\Omega_{a_1,a_2,c,S_1}|=c(b_1+b_2)=S_1+S_2=2S=|\Omega_0|.
	\end{align*} 
	We label the corners of the upper and lower triangles that make up the quadrilateral by
	\begin{align*}
		p_1^{(j)}&=(-c,0),\\
		p_2^{(j)}&=(a_j,(-1)^{j+1}S_j/c),\\
		p_3^{(j)}&=(c,0).
	\end{align*}
	for $j=1,2.$\\
	The boundary of $\Omega_{a_1,a_2,c,S_1}$ may be written as the union of line segments in the form 
	\begin{align*}
		\partial\Omega_{a_1,a_2,c,S_1}=\Gamma_{a_1,c,S_1}^{(1,1)}\cup\Gamma_{a_1,c,S_1}^{(2,1)}\cup\Gamma_{a_2,c,S_2}^{(1,2)}\cup\Gamma_{a_2,c,S_2}^{(2,2)},
	\end{align*}
	where we define $\Gamma_{a_j,c,S_j}^{(i,j)}$ for $i=1,2$ to be the line segment connecting $p^{(j)}_i=(x_{i,j},y_{i,j})$ to $p^{(j)}_{i+1}=(x_{i+1,j},y_{i+1,j}),$ that is
	\begin{align*}
		\Gamma_{a_j,c,S_j}^{(i,j)}=\lbrace p_i^{(j)}+t(p_{i+1}^{(j)}-p_i^{(j)}): t\in[0,1]\rbrace.
	\end{align*}
	\\
	Further, the lengths of the line segments are given by
	\begin{align*}
		|\Gamma_{a_j,c,S_1}|=||p_i^{(j)}-p_{i+1}^{(j)}||=\sqrt{\frac{S_j^2}{c^2}+(a_j+(-1)^{(i+1)}c)^2}.
	\end{align*}
	Note that since $|\Gamma_{0}^{(i,j)}|$ is independent of $(i,j),$ the superscript $(i,j)$ will be omitted. 
	The perimeters of the quadrilaterals are given by
	\begin{align*}
		|\partial\Omega_{a_1,a_2,c,S_1}|=&\Sigma_{i,j}|\Gamma_{a_j,c,S_1}^{(i,j)}|\\
		=&\sqrt{\frac{S_1^2}{c^2}+(a_1-c)^2}+\sqrt{\frac{S_1^2}{c^2}+(a_1+c)^2}+\sqrt{\frac{S_2^2}{c^2}+(a_2-c)^2}+\sqrt{\frac{S_2^2}{c^2}+(a_2+c)^2},\\
	\end{align*}
	with
	\begin{align*}
		\quad |\partial\Omega_0|=4\sqrt{2S}.
	\end{align*}
	We next construct a mapping between the quadrilaterals $\Omega_{0}$ and $\Omega_{a_1,a_2,c,S_1}.$
	\begin{lem}\label{linmap} Let $ a,a_0,S\in\mathbb{R}, c,c_0>0$ with $a\neq\pm c.$ Then, a linear mapping between the triangle $\Omega_0$ defined by the vertices $(c_0,0),(-c_0,0),(a_0,S/c_0)$ and the triangle $\Omega_{a,c}$ defined by the vertices $(c_1,0),(-c_1,0),(a_1,\tilde{S}/c_1)$ is given by
		\begin{align*}
			\mathcal{L}:\Omega_0\to\Omega_{a,c}:(x,y)\mapsto\left(\frac{c_1}{c_0}x+\frac{a_1c_0-a_0c_1}{S}y,\frac{\tilde{S}c_0}{{S}c_1}y\right).
		\end{align*}
	\end{lem}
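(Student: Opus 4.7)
The proof is essentially a direct verification. The stated formula visibly defines a linear map $\mathcal{L}: \mathbb{R}^2 \to \mathbb{R}^2$, given by the $2 \times 2$ matrix
\begin{align*}
M = \begin{pmatrix} c_1/c_0 & (a_1 c_0 - a_0 c_1)/S \\ 0 & \tilde{S} c_0 / (S c_1) \end{pmatrix},
\end{align*}
so the only content of the lemma is that $\mathcal{L}$ sends the triangle $\Omega_0$ bijectively onto the triangle $\Omega_{a,c}$.

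My plan is to exploit the fact that any affine (here, linear) bijection $\mathbb{R}^2 \to \mathbb{R}^2$ carries a triangle to a triangle by mapping vertices to vertices: the image of the convex hull of three points is exactly the convex hull of their images. So the proof reduces to checking that $\mathcal{L}$ sends each of the three vertices $(c_0,0)$, $(-c_0,0)$, $(a_0, S/c_0)$ of $\Omega_0$ to the corresponding vertex of $\Omega_{a,c}$.

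The first two computations are immediate: $\mathcal{L}(\pm c_0, 0) = (\pm c_1, 0)$ because the second coordinate and the $y$-dependent term of the first coordinate both vanish. For the third vertex, substituting $(x,y) = (a_0, S/c_0)$ gives first coordinate
\begin{align*}
\frac{c_1}{c_0} a_0 + \frac{a_1 c_0 - a_0 c_1}{S}\cdot\frac{S}{c_0} = \frac{a_0 c_1 + a_1 c_0 - a_0 c_1}{c_0} = a_1,
\end{align*}
and second coordinate $\tilde{S} c_0 /(S c_1) \cdot S/c_0 = \tilde{S}/c_1$, as required. Finally, $\mathcal{L}$ is a bijection because $\det M = \tilde{S}/(S) \neq 0$ (using $\tilde{S}, S > 0$), so it restricts to a bijection between the two triangles.

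There is no real obstacle here beyond the algebraic check; the hypothesis $a \neq \pm c$ plays no role in the verification itself but presumably guarantees non-degeneracy of $\Omega_{a,c}$ for the later applications where $\mathcal{L}$ will be used to pull back the Robin problem.
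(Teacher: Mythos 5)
Your proof is correct and follows essentially the same route as the paper: check that the linear map sends the three vertices to the three corresponding vertices, then invoke the fact that a linear (bijective) map carries the triangle spanned by the vertices to the triangle spanned by their images. The explicit vertex computation and the determinant check for bijectivity are just slightly more detail than the paper, which leaves the verification to the reader.
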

	\begin{proof}
		
		It is easily verified that $\mathcal{L}$ is a linear mapping that maps the points $(c_0,0),(-c_0,0),(a_0,S/c_0)$ to the images $(c_1,0),(-c_1,0),(a_1,S/c_1).$ Since linear maps preserve line segments, it follows that $\mathcal{L}$ maps $\Omega_0$ to $\Omega_{a,c}.$
	\end{proof}
	In line with Lemma \ref{linmap} above, viewing $\Omega_{a_1,a_2,c,S_1}$ as the union of the triangles $\Omega_{a_1,c}$ and $\Omega_{a_2,c}$ with areas $S_1$ and $S_2$ respectively we construct the following map piecewise
	\begin{align*}
		\mathscr{L}_{a_1,a_2,c,S_1}:\Omega^+_0\to\Omega^+_{a_1,a_2,c,S_1}:\left\lbrace(x,y)\mapsto\left(\frac{c}{c_0}x+\frac{a_1c_0}{S}y,\frac{c_0S_1}{cS}y\right)\right\rbrace,\\
		\mathscr{L}_{a_1,a_2,c,S_1}:\Omega^-_0\to\Omega^-_{a_1,a_2,c,S_1}:\left\lbrace(x,y)\mapsto\left(\frac{c}{c_0}x-\frac{a_2c_0}{S}y,\frac{c_0S_2}{cS}y\right)\right\rbrace,
	\end{align*}
	with inverse
	\begin{align*}
		\mathscr{L}^{-1}_{a_1,a_2,c,S_1}:\Omega^+_{a_1,a_2,c,S_1}\to\Omega^+_{0}:\left\lbrace(x,y)\mapsto\left(\frac{c_0}{c}x-\frac{a_1c_0}{S_1}y,\frac{cS}{c_0S_1}y\right)\right\rbrace,\\
		\mathscr{L}^{-1}_{a_1,a_2,c,S_1
		}:\Omega^-_{a_1,a_2,c,S_1}\to\Omega^-_{0}:\left\lbrace(x,y)\mapsto\left(\frac{c_0}{c}x+\frac{a_2c_0}{S_2}y,\frac{cS}{c_0S_2}y\right)\right\rbrace.
	\end{align*}
	Here, for a set $A\subseteq\mathbb{R}^2,$ we set $A^{\pm}=A\cap\lbrace\pm y\ge 0\rbrace.$\\
	
	We define the maps  $ U_{a_1,a_2,c,S_1}:L^2(\Omega_{a_1,a_2,c,S_1})\to L^2(\Omega_0)$ and 
	$U_{a_1,a_2,c,S_1}^{-1}:L^2(\Omega_0)\to L^2(\Omega_{a_1,a_2,c,S_1})$ via
	\begin{align*}
		U_{a_1,a_2,c,S_1}(u)=&u\circ\mathscr{L}_{a_1,a_2,c,S_1},\\
		U_{a_1,a_2,c,S_1}^{-1}(u)=&u\circ\mathscr{L}_{a_1,a_2,c,S_1}^{-1}.
	\end{align*}
	\begin{lem}\label{mapprop}
		The maps $U_{a_1,a_2,c,S_1},$ $U^{-1}_{a_1,a_2,c,S_1}$ satisfy the following properties:
		\begin{enumerate}[label=(\Roman*)]
			\item For any $u,v\in L^2(\Omega_{a_1,a_2,c,S_1})$ we have  \begin{align*}
				\langle U_{a_1,a_2,c,S_1}\circ u,U_{a_1,a_2,c,S_1}\circ v\rangle_{L^2(\Omega_{0})}=\frac{S}{S_1}\langle u,v\rangle_{L^2(\Omega_{a_1,a_2,c,S_1}^+)}+\frac{S}{S_2}\langle u,v\rangle_{L^2(\Omega_{a_1,a_2,c,S_1}^-)}.
			\end{align*}
			Similarly, for any $\phi,\psi\in L^2(\Omega_{0})$ we have \begin{align*}
				\langle U_{a_1,a_2,c,S_1}^{-1}\circ \phi,U_{a_1,a_2,c,S_1}^{-1}\circ \psi\rangle_{L^2(\Omega_{a_1,a_2,c,S_1})}=\frac{S_1}{S}\langle \phi,\psi\rangle_{L^2(\Omega_{0}^+)}+\frac{S_2}{S}\langle \phi,\psi\rangle_{L^2(\Omega_{0}^-)}.
			\end{align*}
			\item For $i,j=1,2,$ 
			\begin{align*}
				\langle u,v\rangle_{L^2(\Gamma^{(i,j)}_{a_j,c,S_1})}=\frac{|\Gamma_{a_j,c,S_1}^{(i,j)}|}{|\Gamma_{0}|}\langle U_{a_1,a_2,c,S_1}\circ u,U_{a_1,a_2,c,S_1}\circ v\rangle_{L^2(\Gamma^{(i,j)}_{0})},
			\end{align*}
		\end{enumerate}
	\end{lem}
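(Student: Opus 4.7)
My plan is to observe that both parts of the lemma follow from the change of variables formula applied separately on the upper and lower halves, once the Jacobian determinants are computed. There is no genuine obstacle; the work is bookkeeping to keep the $\pm$ pieces straight and to identify the arc length scaling on each boundary edge.

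\textbf{Step 1: Compute the Jacobians.} I would first note that $\mathscr{L}_{a_1,a_2,c,S_1}$ restricted to $\Omega_0^+$ and to $\Omega_0^-$ is linear, so its Jacobian is constant on each half. Directly from the formulas,
\begin{align*}
\det D\mathscr{L}_{a_1,a_2,c,S_1}\big|_{\Omega_0^+}
=\det\begin{pmatrix}c/c_0 & a_1 c_0/S\\ 0 & c_0 S_1/(cS)\end{pmatrix}=\frac{S_1}{S},
\end{align*}
and analogously $|\det D\mathscr{L}_{a_1,a_2,c,S_1}|=S_2/S$ on $\Omega_0^-$. The Jacobians of $\mathscr{L}^{-1}_{a_1,a_2,c,S_1}$ are the reciprocals $S/S_1$ and $S/S_2$.

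\textbf{Step 2: Prove (I).} Split $\Omega_0=\Omega_0^+\cup\Omega_0^-$ and apply the change of variables $(x,y)=\mathscr{L}_{a_1,a_2,c,S_1}(\xi,\eta)$ on each half. Since $(U_{a_1,a_2,c,S_1}u)(U_{a_1,a_2,c,S_1}v)=(u\cdot v)\circ\mathscr{L}_{a_1,a_2,c,S_1}$, the substitution gives
\begin{align*}
\int_{\Omega_0^+}(u\circ\mathscr{L})(v\circ\mathscr{L})\,d\xi d\eta
=\frac{S}{S_1}\int_{\Omega_{a_1,a_2,c,S_1}^+} u\,v\,dxdy,
\end{align*}
and likewise with $S_2/S$ on the lower half; adding the two yields the first identity. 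The second identity for $U^{-1}_{a_1,a_2,c,S_1}$ follows by the same argument using the reciprocal Jacobians $S_1/S$ and $S_2/S$.

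\textbf{Step 3: Prove (II).} Fix an edge $\Gamma_0^{(i,j)}\subset\partial\Omega_0$. Since $\mathscr{L}_{a_1,a_2,c,S_1}$ is linear on the relevant half, it maps $\Gamma_0^{(i,j)}$ affinely onto $\Gamma_{a_j,c,S_1}^{(i,j)}$. In particular, if $\gamma_0:[0,1]\to\Gamma_0^{(i,j)}$ is the constant-speed parametrization of the source edge, then $\mathscr{L}_{a_1,a_2,c,S_1}\circ\gamma_0$ is the constant-speed parametrization of the target edge, with speed $|\Gamma_{a_j,c,S_1}^{(i,j)}|$ versus $|\Gamma_0|$. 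Hence the arc length elements are related by $ds_{\Gamma_{a_j,c,S_1}^{(i,j)}}=\bigl(|\Gamma_{a_j,c,S_1}^{(i,j)}|/|\Gamma_0|\bigr)\,ds_{\Gamma_0^{(i,j)}}$, and substituting this into the line integral produces the stated identity. This finishes the lemma.
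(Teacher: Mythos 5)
Your proposal is correct and follows essentially the same route as the paper: part (I) is the change of variables formula applied separately on $\Omega_0^{\pm}$ with the piecewise-constant Jacobians $S_1/S$ and $S_2/S$ (the paper records the equivalent reciprocal Jacobian of $\mathscr{L}^{-1}$ as a single indicator-function expression), and part (II) is the observation that the affine map sends the constant-speed parametrization of $\Gamma_0^{(i,j)}$ to that of $\Gamma_{a_j,c,S_1}^{(i,j)}$, so the line integrals differ by the ratio of edge lengths. No gaps.
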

	\begin{proof}   
		\begin{enumerate}[label=(\Roman*)]
			\item 
		Make a change of variables $\tilde{x}=\mathscr{L}_{a_1,a_2,c,S_1}(x),$ where the Jacobian is
		\begin{align*}
			J=\det \left[\begin{matrix}
				\frac{c_0}{c} & {c_0}\left(-\frac{a_1}{S_1}\bchi_{y\geq0}+\frac{a_2}{S_2}\bchi_{y<0}\right)\\
				0 & \frac{cS}{c_0\left(S_1\bchi_{y\geq0}+S_2\bchi_{y<0}\right)}
			\end{matrix}\right]=\frac{S}{\left(S_1\bchi_{y\geq0}+S_2\bchi_{y<0}\right)},
		\end{align*}
		and the proof follows.\\
		The proof for $U_{a_1,a_2,c,S_1}^{-1}$  is analogous.

		\item Let $q_i^j=(\xi_i^j,\eta_i^j)=\mathscr{L}_{a_1,a_2,c,S_1}(p_i^j).$ Let $(x_i^j,y_i^j)=p_i^j\in\Omega_{a_1,a_2,c,S_1}$ such that $\Gamma_{a_j,c,S_1}^{(i,j)}$ is the line segment connecting $p_i^j$ and $p_{i+1}^j.$ Further, let $\psi=U_{a_1,a_2,c,S_1}\circ u$ and $\phi =U_{a_1,a_2,c,S_1}\circ v.$ Then,
		\begin{align*}
			\frac{\langle u,v\rangle_{L^2(\Gamma^{(i,j)}_{a_j,c,S_1})}}{\langle \psi,\phi\rangle_{L^2(\Gamma^{(i,j)}_{0})}}=&\frac{|\Gamma_{a_j,c,S_1}^{(i,j)}|\int_{0}^{1}u\left(\left[\begin{matrix}x_i^j\\y_i^j\end{matrix}\right]+t\left[\begin{matrix}x_{i+1}^j-x_i^j\\y_{i+1}^j-y_i^j\end{matrix}\right]\right)v\left(\left[\begin{matrix}x_i^j\\y_i^j\end{matrix}\right]+t\left[\begin{matrix}x_{i+1}^j-x_i^j\\y_{i+1}^j-y_i^j\end{matrix}\right]\right)dt}{|\Gamma_{0}|\int_{0}^{1}\psi\left(\left[\begin{matrix}\xi_i^j\\ \eta_i^j\end{matrix}\right]+\tau\left[\begin{matrix}\xi_{i+1}^j-\xi_i^j\\ \eta_{i+1}^j-\eta_i^j\end{matrix}\right]\right)\phi\left(\left[\begin{matrix}\xi_i^j\\ \eta_i^j\end{matrix}\right]+\tau\left[\begin{matrix}\xi_{i+1}^j-\xi_i^j\\ \eta_{i+1}^j-\eta_i^j\end{matrix}\right]\right)d\tau},\\
			=&\frac{|\Gamma_{a_j,c,S_1}^{(i,j)}|}{|\Gamma_{0}|},
		\end{align*}
		where we have used the fact that $\psi=u\circ\mathscr{L}_{a_1,a_2,c,S_1},$ $\phi=v\circ\mathscr{L}_{a_1,a_2,c,S_1},$ and $\mathscr{L}_{a_1,a_2,c,S_1}$ is linear, so 
		\begin{align*}
			\mathscr{L}_{a_1,a_2,c,S_1}\left(\left[\begin{matrix}\xi_i^j\\ \eta_i^j\end{matrix}\right]+\tau\left[\begin{matrix}\xi_{i+1}^j-\xi_i^j\\ \eta_{i+1}^j-\eta_i^j\end{matrix}\right]\right)=&\mathscr{L}_{a_1,a_2,c,S_1}\left(\left[\begin{matrix}\xi_i^j\\ \eta_i^j\end{matrix}\right]\right)+\tau\mathscr{L}_{a_1,a_2,c,S_1}\left(\left[\begin{matrix}\xi_{i+1}^j-\xi_i^j\\ \eta_{i+1}^j-\eta_i^j\end{matrix}\right]\right),\\
			=&\left[\begin{matrix}x_i^j\\y_i^j\end{matrix}\right]+\tau\left[\begin{matrix}x_{i+1}^j-x_i^j\\y_{i+1}^j-y_i^j\end{matrix}\right].
		\end{align*}
	\end{enumerate}
\end{proof}
\subsection{Functional Setup}
We view the Robin problem (\ref{Eq:robA}-\ref{Eq:robB}) on the quadrilateral $\Omega_{a_1,a_2,c,S_1}$ as the spectral problem in the Hilbert space $L^2(\Omega)$ for
\begin{align}\label{opp}
	H_{\alpha}u=-\Delta u, \quad D(H_\alpha)=\lbrace u\in H^1(\Omega) \ : \ \frac{\partial u}{\partial n}+\alpha u=0 \ \text{ on } \ \partial\Omega\rbrace,
\end{align}
where the derivative is viewed as a distribution where necessary, and the associated quadratic form is
\begin{align*}
	h_{\alpha}[u]=\int_{\Omega}|\nabla u|^2+\alpha\int_{\partial\Omega}|u|^2, \quad D(h_\alpha)=H^{1}(\Omega).
\end{align*}
We define the operator $\hat{H}_{\alpha,a_1,a_2,c,S_1}$ via
\begin{align*}
	\hat{H}_{\alpha,a_1,a_2,c,S_1}=U_{a_1,a_2,c,S_1}H_\alpha U_{a_1,a_2,c,S_1}^{-1},
\end{align*}
where $D(\hat{H}_{\alpha,a_1,a_2,c,S_1})=U_{a_1,a_2,c,S_1}(D(H_{\alpha})).$\\
Then the operators $H_\alpha$ and $\hat{H}_{\alpha,a_1,a_2,c,S_1}$ are isospectral. One observes that $\hat{H}_{\alpha,a_1,a_2,c}$ is the operator on $L^2(\Omega_0)$ associated with the quadratic form
\begin{align*}
	\hat{h}_{\alpha,a_1,a_2,c,S_1}[\psi]=h_\alpha\left[\sqrt{\frac{S}{S_1}\bchi_{y>0}+\frac{S}{S_2}\bchi_{y<0}}U^{-1}_{a_1,a_2,c,S_1}\psi\right].\end{align*}
This associated form can be written
\begin{align}
	\hat{h}_{\alpha,a_1,a_2,c,S_1}[\psi,\varphi]\nonumber
	=&\int_{\Omega_{0}}\left(\left({\partial_1 \mathscr{L}_{a_1,a_2,c,S_1}^{-1}}\cdot \nabla\right)\psi\right)\left(\left({\partial_1 \mathscr{L}_{a_1,a_2,c,S_1}^{-1}}\cdot \nabla\right)\varphi\right)\nonumber\\
	+&\left(\left({\partial_2 \mathscr{L}_{a_1,a_2,c,S_1}^{-1}}\cdot \nabla\right)\psi\right)\left(\left({\partial_2 \mathscr{L}_{a_1,a_2,c,S_1}^{-1}}\cdot \nabla\right)\varphi\right) d{x}d{y}\nonumber\\
	+&\alpha\sum_{i,j}\frac{|\Gamma_{a_j,c,S_1}^{(i,j)}|S}{|\Gamma_{0}|S_j}\langle\psi,\varphi\rangle_{L^2(\Gamma^{(i,j)}_{0})},\label{eq:hnodif}
\end{align}\\
or more explicitly,
\begin{align}\label{explexpl}
	\hat{h}_{\alpha,a_1,a_2,c,S_1}[\psi]=&\frac{c_0^2}{c^2}||\partial_1\psi||^2_{L^2(\Omega_0)}\nonumber\\+&\left|\left|\left(\bchi_{y<0}\frac{a_2}{S_2}-\bchi_{y>0}\frac{a_1}{S_1}\right)c_0\partial_1\psi+\frac{cS}{c_0\left(S_1\bchi_{y>0}+S_2\bchi_{y<0}\right)}\partial_2\psi\right|\right|^2_{L^2(\Omega_0)}\nonumber\\
	&+\frac{\alpha S}{\sqrt{2}cc_0}\sum_{i,j}\frac{\sqrt{S_j^2+c^2(a_j+(-1)^{i+1}c)^2}}{S_j}||\psi||^2_{L^2(\Gamma^{(i,j)}_{0})}, 
\end{align}
where we write $\partial_1$  to mean $\frac{\partial}{\partial x}$ and $\partial_2$  to mean $\frac{\partial}{\partial y}.$\\
In all that follows, let $\psi_{a_1,a_2,c,S_1}$ be the positive first eigenfunction of $\hat{H}_{\alpha,a_1,a_2,c,S_1}$ corresponding to $\lambda_{a_1,a_2,c,S_1},$ scaled so that $||\psi_{a_1,a_2,c,S_1}||_{L^2(\Omega_0)}=1.$ Then the eigenvalue problem $\hat{H}_{\alpha,a_1,a_2,c,S_1}\psi_{a_1,a_2,c}=\lambda_{a_1,a_2,c,S_1}\psi_{a_1,a_2,c,S_1}$ is equivalent to the weak problem 
\begin{align}\label{weakform}
	\hat{h}_{\alpha,a_1,a_2,c,S_1}[\phi,\psi_{a_1,a_2,c,S_1}]=\lambda_{a_1,a_2,c,S_1}\langle\phi,\psi_{a_1,a_2,c,S_1}\rangle, \quad \forall \phi\in H^1(\Omega_{0}).
\end{align}
Further, both $\lambda_{a_1,a_2,c,S_1}$ and $\psi_{a_1,a_2,c,S_1}$ are real analytic functions of the parameters $a_1,a_2,c,S_1$ for $a_1,a_2\in\mathbb{R},$ $c\in (0,\infty)$ and $S_1\in(0,2S).$ This may be observed by noting that $\hat{H}_{\alpha,a_1,a_2,c,S_1}$ forms a holomorphic family of type (B) in the sense of Kato  (see \cite[Theorem 4.8  section VII]{KatoTosio1995PTfL}).\\
Note that for the remainder of the paper we will use $\lambda_{0}$ and $\psi_0$ to denote $\lambda_{a_1,a_2,c,S_1}$ and $\psi_{a_1,a_2,c,S_1}$ when $(a_1,a_2,c,S_1)=(0,0,c_0,S).$

\subsection{The Solution on the Square}
The solution of the problem $(\ref{opp})$ is known explicitly for rectangles (see for instance \cite{laugesen2019robin}). In particular, the explicit solution on the square and its properties will be of use in the following sections, and so for completeness we state its solution below, as well as some of its properties which will be of use later. We will frame the problem in terms of the rotated square $\Omega_{0}$ of area $2S,$ for $S>0,$ with vertices $(\pm\sqrt{S},0)$ and $(0,\pm\sqrt{S}).$
\begin{figure}[H]
	\begin{tikzpicture}
		\draw (2,0) -- (4,2) node[anchor=south,blue] {$(0,\sqrt{S})$};
		\draw (4,2) -- (6,0) node[anchor=west,blue] {$(\sqrt{S},0)$};
		\draw  (6,0) -- (4,-2) node[anchor=north,blue] {$(0,-\sqrt{S})$};
		\draw  (4,-2) -- (2,0) node[anchor=east,blue] {$(-\sqrt{S},0)$};
	\end{tikzpicture}
	\caption{The Rotated Square $\Omega_0$}
\end{figure}
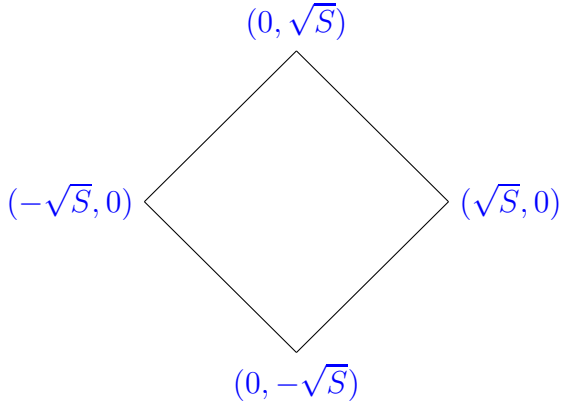
\begin{prop}\label{pde}
	Consider the eigenvalue problem 
	\begin{align}\label{eigpro}
		\begin{cases}
			\Delta u =-\lambda u,\quad \text{in } \Omega_0,\\
			\frac{\partial u}{\partial \nu}+\alpha u=0 \quad \text{on } \partial\Omega_0.
		\end{cases}
	\end{align}
	Then the eigenvalue problem (\ref{eigpro}) has first eigenfunction $u_1$ given by
	\begin{align}\label{explform}
		\begin{cases}
			u_1=\cos\left({\frac{\sqrt{\lambda_1}}{2}} (x+y)\right)\cos\left({\frac{\sqrt{\lambda_1}}{2}} (y-x)\right),\quad \alpha>0,\\
			u_1=\cosh\left({\frac{\sqrt{-\lambda_1}}{2}} (x+y)\right)\cosh\left({\frac{\sqrt{-\lambda_1}}{2}} (y-x)\right),\quad \alpha<0,
		\end{cases}
	\end{align}
	with corresponding first eigenvalues $\lambda_1$ given by 
	\begin{align*}
		\lambda_1=\begin{cases}
			2\left(\frac{f^{-1}(\alpha L)}{L}\right)^2,\quad \alpha>0,\\
			-2\left(\frac{g^{-1}(-\alpha L)}{L}\right)^2,\quad \alpha<0,
		\end{cases}
	\end{align*}
	where 
	\begin{align*}
		f(t)=&t\tan(t), \quad t\in(0,\pi/2)\cup(\pi/2,\pi),\\
		g(t)=&t\tanh(t), \quad t\geq 0,
	\end{align*}
	and $L=\sqrt{S/2}.$
\end{prop}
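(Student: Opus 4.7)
The plan is a standard separation-of-variables argument, made available by the fortuitous fact that the rotated square $\Omega_0$ becomes an axis-aligned square under a $45^\circ$ rotation.

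First I would introduce the change of coordinates $\xi=(x+y)/\sqrt{2},\ \eta=(y-x)/\sqrt{2}$, which is an isometry of $\mathbb{R}^2$ and hence preserves both the Laplacian and the Robin boundary condition. Under this map the vertices $(\pm\sqrt{S},0),(0,\pm\sqrt{S})$ are sent to $(\pm L,\pm L)$ with $L=\sqrt{S/2}$, so $\Omega_0$ becomes the axis-aligned square $Q=(-L,L)\times(-L,L)$. The eigenvalue problem becomes $-(\tilde u_{\xi\xi}+\tilde u_{\eta\eta})=\lambda\tilde u$ on $Q$ with $\tilde u_\xi+\alpha\tilde u=0$ on $\xi=L$, $-\tilde u_\xi+\alpha\tilde u=0$ on $\xi=-L$, and similarly in $\eta$.

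Next I would look for separated solutions $\tilde u(\xi,\eta)=X(\xi)Y(\eta)$. Since $Q$ is a product and the boundary conditions decouple into the $\xi$ and $\eta$ variables, the ansatz gives the one-dimensional Robin problem $-X''=\mu X$ on $(-L,L)$ with $X'(L)+\alpha X(L)=0$ and $X'(-L)=\alpha X(-L)$, and the same for $Y$ with eigenvalue $\nu$; the original eigenvalue is $\lambda=\mu+\nu$. To isolate the \emph{first} eigenfunction I would use the fact that the ground state is everywhere positive (hence sign-preserving and symmetric under $\xi\mapsto-\xi$), so the relevant one-dimensional first eigenfunction is even. For $\alpha>0$ the positive even candidate is $X(\xi)=\cos(k\xi)$ with $\mu=k^2>0$, and the Robin condition at $\xi=L$ reads $-k\sin(kL)+\alpha\cos(kL)=0$, i.e.\ $kL\tan(kL)=\alpha L$, which is exactly $f(kL)=\alpha L$ with $kL\in(0,\pi/2)$. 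For $\alpha<0$ the candidate is $X(\xi)=\cosh(k\xi)$ with $\mu=-k^2<0$, and the Robin condition becomes $k\sinh(kL)+\alpha\cosh(kL)=0$, i.e.\ $kL\tanh(kL)=-\alpha L$, which is $g(kL)=-\alpha L$.

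Then I would verify that $f$ is strictly increasing on $(0,\pi/2)$ from $0$ to $+\infty$ and $g$ is strictly increasing on $[0,\infty)$ from $0$ to $+\infty$, so the equations $f(kL)=\alpha L$ and $g(kL)=-\alpha L$ are uniquely solvable, yielding $k=f^{-1}(\alpha L)/L$ and $k=g^{-1}(-\alpha L)/L$ respectively. Taking $Y$ to be the same function in $\eta$ gives $\lambda=2\mu$, producing the two formulas in the statement. Pulling back via $\xi=(x+y)/\sqrt{2},\ \eta=(y-x)/\sqrt{2}$ and using $\sqrt{|\lambda_1|}/2=k/\sqrt{2}$ yields the explicit expressions in (\ref{explform}).

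Finally I would confirm that this separated solution is indeed the \emph{first} eigenfunction: in both cases $u_1$ is strictly positive on $\overline{\Omega_0}$ (products of cosines evaluated below $\pi/2$, or products of $\cosh$), so by the simplicity of $\lambda_1$ and the Perron-type characterisation of the ground state for the Robin Laplacian, it must correspond to the smallest eigenvalue. The only genuinely nontrivial point is verifying that the even branch gives the lowest separated eigenvalue (one must check that odd candidates such as $\sin(k\xi)$ or $\sinh(k\xi)$ produce larger values of $\mu$), which follows from a direct monotonicity comparison of the associated transcendental equations; this is the one place where a small technical check is needed, but it is routine.
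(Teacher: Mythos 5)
Your proposal is correct and is precisely the standard separation-of-variables derivation (rotate to an axis-aligned square, solve the one-dimensional Robin problem on $(-L,L)$, and identify the positive product solution as the ground state by simplicity of $\lambda_1$); the paper gives no proof of Proposition \ref{pde}, instead citing the known explicit solution on rectangles, so your argument simply fills in what the paper delegates to the literature. The closing remark about comparing even and odd branches is in fact superfluous, since positivity of the candidate eigenfunction together with simplicity of the first eigenvalue already forces it to be the ground state.
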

\begin{cry}\label{symprop}
	Let $\alpha\neq0,$ then the function $u_1$ as given by Proposition \ref{pde} has the following properties
	\begin{enumerate}[label=(\Roman*)]
		\item $u_1$ is even in $x$
		\item $u_1$ is even in $y$
		\item $||u_1||_{L^2(\Gamma_{0}^{i,j})}$ is independent of $i,j$
		\item $||\partial_1u_1||_{L^2(\Omega_0)}^2$=$||\partial_2u_1||_{L^2(\Omega_0)}^2=\frac{1}{2}||\nabla u_1||_{L^2(\Omega_0)}^2$
		\item $\langle \partial_1 u_1,\partial_2 u_1\rangle_{L^2(\Omega_0^+)}=0$
		\item $\langle \partial_1 u_1,\partial_2 u_1\rangle_{L^2(\Omega_0^-)}=0$
		\item $||\partial_1u_1||^2_{L^2(\Omega_0^+)}=||\partial_1u_1||^2_{L^2(\Omega_0^-)}=\frac{1}{2}||\partial_1u_1||^2_{L^2(\Omega_0)}$
		\item $||\partial_2u_1||^2_{L^2(\Omega_0^+)}=||\partial_2u_1||^2_{L^2(\Omega_0^-)}=\frac{1}{2}||\partial_2u_1||^2_{L^2(\Omega_0)}.$
	\end{enumerate}
\end{cry}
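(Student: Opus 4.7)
The plan rests on a single trigonometric identity that dramatically simplifies the eigenfunction. Writing $k = \sqrt{|\lambda_1|}/2$, the product-to-sum formula gives, in the case $\alpha<0$,
\begin{align*}
u_1(x,y) = \cosh(k(x+y))\cosh(k(y-x)) = \tfrac{1}{2}\bigl[\cosh(2kx) + \cosh(2ky)\bigr],
\end{align*}
and analogously $u_1(x,y) = \tfrac{1}{2}[\cos(2kx) + \cos(2ky)]$ for $\alpha>0$. In this additively separated form it is immediate that $u_1(x,y) = u_1(y,x)$, and that $\partial_1 u_1$ is a function of $x$ alone while $\partial_2 u_1$ is a function of $y$ alone, each odd in its variable. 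Every remaining statement will then be a short consequence of the dihedral symmetries of $\Omega_0$ combined with these parity facts.

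Properties (I) and (II) are immediate from the evenness of $\cosh$ (resp.\ $\cos$) in either argument. For (III), I would observe that the dihedral group of the square $\Omega_0$ (generated by reflection in the $x$-axis, reflection in the $y$-axis, and reflection in the diagonal $y=x$) acts transitively on the four sides $\Gamma_0^{(i,j)}$, and each of these reflections leaves $u_1$ invariant by the additively separated form above; hence the four boundary $L^2$-norms coincide.

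For (IV), the reflection $(x,y)\mapsto(y,x)$ preserves $\Omega_0$ and, applied as a change of variables, interchanges $\|\partial_1 u_1\|_{L^2(\Omega_0)}^2$ with $\|\partial_2 u_1\|_{L^2(\Omega_0)}^2$, so each equals half of $\|\nabla u_1\|_{L^2(\Omega_0)}^2$. For (VII) and (VIII), the reflection $y\mapsto -y$ maps $\Omega_0^+$ onto $\Omega_0^-$ while leaving both $|\partial_1 u_1|^2$ (independent of $y$) and $|\partial_2 u_1|^2$ (even in $y$) invariant. For (V) and (VI), the inner product $\int_{\Omega_0^\pm}\partial_1 u_1\,\partial_2 u_1$ factors, at each fixed $y$, into $\partial_2 u_1(y)$ times an integral of $\partial_1 u_1(x)$ over an interval $[-(\sqrt{S}-|y|),\sqrt{S}-|y|]$ that is symmetric about $0$; since $\partial_1 u_1$ is odd in $x$, this inner integral vanishes identically, and hence so does the whole integral.

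No step is genuinely difficult once the product-to-sum reduction has been spotted; the main ``obstacle'' is simply recognising that this identity collapses $u_1$ into a sum of a function of $x$ and a function of $y$, after which (I)--(VIII) are routine symmetry and parity arguments requiring no further computation.
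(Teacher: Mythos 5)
Your proposal is correct: the product-to-sum identity $\cosh(k(x+y))\cosh(k(y-x))=\tfrac12[\cosh(2kx)+\cosh(2ky)]$ (and its trigonometric analogue) is valid, and the parity and dihedral-symmetry arguments you build on it establish (I)--(VIII) without error. This is essentially the paper's own approach --- the paper's proof is the one-line ``follows from checking the explicit form (\ref{explform})'' --- and your additive decomposition is simply a clean way of organising that check.
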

\begin{proof}
	Follows from checking the explicit form (\ref{explform}).
\end{proof}
In the following lemmas we provide a useful result that will be of use in the following section.                                                                      
\begin{lem}\label{extre}
	Let \begin{align*}
		\zeta(\alpha)=||\nabla \psi_0||_{L^2(\Omega_0)}^2+\alpha C||\psi_0||_{L^2(\partial \Omega_0)}^2,
	\end{align*}
	then $\zeta(\alpha)<0$ for all $\alpha<0,$ provided that $C\in(0,1).$
\end{lem}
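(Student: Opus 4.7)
The plan is to use the Rayleigh identity for $\lambda_0$ together with the explicit form of $\psi_0$ on the square from Proposition \ref{pde}, reducing the claim to a one-variable hyperbolic inequality in $t>0$. First, testing the weak eigenvalue equation (\ref{weakform}) against $\phi=\psi_0$, and noting that on the square the change of variables $\mathscr{L}_{0,0,c_0,S}$ is the identity (so $\hat{h}_{\alpha,0,0,c_0,S}$ coincides with $h_\alpha$), the normalisation $\|\psi_0\|_{L^2(\Omega_0)}=1$ produces the Rayleigh identity
\[ \lambda_0 \;=\; \|\nabla\psi_0\|_{L^2(\Omega_0)}^2 + \alpha\,\|\psi_0\|_{L^2(\partial\Omega_0)}^2. \]
Solving this for $\|\nabla\psi_0\|_{L^2(\Omega_0)}^2$ and substituting into $\zeta$ yields the equivalent form
\[ \zeta(\alpha) \;=\; \lambda_0 + (1-C)\,|\alpha|\,\|\psi_0\|_{L^2(\partial\Omega_0)}^2, \]
so, since $\lambda_0<0$ and $(1-C)|\alpha|>0$, the claim reduces to the sharp inequality $(1-C)\,|\alpha|\,\|\psi_0\|_{L^2(\partial\Omega_0)}^2 < |\lambda_0|$.

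Second, I would evaluate the three quantities $|\lambda_0|$, $|\alpha|$ and $\|\psi_0\|_{L^2(\partial\Omega_0)}^2$ in closed form using Proposition \ref{pde}. The orthogonal change of coordinates $(\xi,\eta)=((x+y)/\sqrt{2},(y-x)/\sqrt{2})$ is an isometry carrying $\Omega_0$ onto the axis-aligned square $[-L,L]^2$ with $L=\sqrt{S/2}$, and it converts the eigenfunction $u_1$ from Proposition \ref{pde} into the separated product $\cosh(\gamma\xi)\cosh(\gamma\eta)$ with $\gamma=\sqrt{-\lambda_0/2}$. Setting $t:=\gamma L = \sqrt{-\lambda_0\, S}/2$, the boundary condition from Proposition \ref{pde} reads $-\alpha L = t\tanh t$ and $|\lambda_0|=2t^2/L^2$; the norms $\|u_1\|_{L^2(\Omega_0)}^2$ and $\|u_1\|_{L^2(\partial\Omega_0)}^2$ then factor into one-dimensional integrals of $\cosh^2$ that evaluate to explicit polynomial-hyperbolic expressions in $t$.

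Third, substituting these formulas into the inequality from the first step and clearing positive factors reduces the claim to a single hyperbolic inequality in $t>0$ involving $C$, which can then be verified using the elementary estimate $\sinh u > u$ for $u>0$. I expect the main obstacle to be the algebraic bookkeeping in the second step: carrying out the separated integrals and combining them so that the cancellations leading to a clean one-variable statement become visible. Once the reduction has been made, the verification of the final inequality is elementary.
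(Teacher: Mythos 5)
Your first step coincides with the paper's: both substitute the Rayleigh identity $\lambda_0=\|\nabla\psi_0\|^2_{L^2(\Omega_0)}+\alpha\|\psi_0\|^2_{L^2(\partial\Omega_0)}$ to rewrite $\zeta(\alpha)=\lambda_0+|\alpha|(1-C)\|\psi_0\|^2_{L^2(\partial\Omega_0)}$, reducing the claim to $(1-C)\,|\alpha|\,\|\psi_0\|^2_{L^2(\partial\Omega_0)}<|\lambda_0|$. From there the paper argues via the derivative identity $\partial\lambda_0/\partial\alpha=\|\psi_0\|^2_{L^2(\partial\Omega_0)}$ and implicit differentiation of $g(L\sqrt{-\lambda_0/2})=-\alpha L$, while you compute the norms directly; either route is legitimate in principle. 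The genuine gap is in your third step. Carrying out your second step with $\gamma=\sqrt{-\lambda_0/2}$, $t=\gamma L$ and $\|\psi_0\|_{L^2(\Omega_0)}=1$ gives
\[
\|\psi_0\|^2_{L^2(\partial\Omega_0)}=\frac{4\gamma\cosh^2 t}{\,t+\sinh t\cosh t\,},\qquad |\alpha|=\gamma\tanh t,\qquad |\lambda_0|=2\gamma^2,
\]
so the inequality from your first step is equivalent to $(1-C)\,\frac{2\sinh t\cosh t}{t+\sinh t\cosh t}<1$, that is, to $(1-2C)\sinh t\cosh t<t$. For $C\geq 1/2$ this is immediate, but for $C\in(0,1/2)$ it \emph{fails} for all sufficiently large $t$ (equivalently, large $|\alpha|$), because $\sinh t\cosh t$ grows exponentially: for instance $C=1/4$, $t=2$ gives $\tfrac12\sinh 2\cosh 2\approx 6.8>2$. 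The elementary estimate $\sinh u>u$ therefore points in the wrong direction, and no correct version of your third step exists on the range $C\in(0,1/2)$.

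You should also be aware that this is not merely a defect of your route: the computation above shows that the statement as written is false for $C\in(0,1/2)$ once $|\alpha|$ is large enough. The paper's proof ends with the always-true inequality $\frac{(1-C)\,t\tanh t}{t\tanh t+t^2/\cosh^2 t}<1$, but this is reached only because the chain-rule step for $\frac{d}{d\alpha}\bigl(L\sqrt{-\lambda_0/2}\bigr)$ drops a factor of $\tfrac12$, so that $(\lambda_0)'_\alpha$ is computed at half its true value; restoring the factor replaces $(1-C)$ by $2(1-C)$ in the numerator and yields precisely $(1-2C)\sinh t\cosh t<t$ again. Since the lemma is applied in the proof of Theorem \ref{thm1} with $C=1/4$, $5/12$ and $1/2$, the restriction to $C\geq 1/2$ is not harmless, and the issue propagates beyond this lemma.
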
                                                                                                
\begin{proof}
	
	Since
	\begin{align*}
		\lambda_0=||\nabla \psi_0||_{L^2(\Omega_0)}^2+\alpha ||\psi_0||_{L^2(\partial \Omega_0)}^2,
	\end{align*}
	it follows that 
	\begin{align*}
		\zeta(\alpha)=\lambda_{0}+\alpha(C-1)||\psi_0||_{L^2(\partial \Omega_0)}^2.
	\end{align*}
	Let $(\lambda_0)'_{\alpha}$ denote $\frac{\partial\lambda_{0}}{\partial \alpha}.$ Then
	\begin{align*}
		(\lambda_0)'_{\alpha}=||\psi_0||_{L^2(\partial \Omega_0)}^2,
	\end{align*}
	(see for instance \cite[section 4.3.2]{henrot2017shape} ) 
	and so
	\begin{align*}
		\zeta(\alpha)=\lambda_{0}+\alpha(C-1)(\lambda_0)'_{\alpha}.
	\end{align*}
	Thus
	\begin{align*}
		\zeta(\alpha)<0
		\iff \frac{(\lambda_0)'_{\alpha}}{\lambda_{0}}>\frac{1}{\alpha(1-C)}.
	\end{align*}
	Recall from Proposition \ref{pde}
	\begin{align*}
		g\left(L\sqrt{\frac{-\lambda_{0}}{2}}\right)=-\alpha L,
	\end{align*}
	where $g(t)=t\tanh (t).$\\
	Then
	\begin{align*}
		g'\left(L\sqrt{\frac{-\lambda_{0}}{2}}\right)\frac{1}{\sqrt{\frac{-\lambda_{0}}{2}}}\frac{-L}{2}(\lambda_0)'_{\alpha}=-L
		\implies (\lambda_0)'_{\alpha}=\frac{-L\lambda_0}{L\sqrt{\frac{-\lambda_{0}}{2}}g'\left(L\sqrt{\frac{-\lambda_{0}}{2}}\right)}.
	\end{align*}
	Now,
	
	\begin{align*}
		\frac{(\lambda_0)'_{\alpha}}{\lambda_{0}}=\frac{-L}{\left(L\sqrt{\frac{-\lambda_{0}}{2}}\right)\tanh\left(L\sqrt{\frac{-\lambda_{0}}{2}}\right)+\left(L\sqrt{\frac{-\lambda_{0}}{2}}\right)^2\text{sech}^2\left(L\sqrt{\frac{-\lambda_{0}}{2}}\right)},
	\end{align*}
	where we have used \begin{align*}
		g'(t)=\tanh(t)+t\text{sech}^2(t).
	\end{align*}
	Thus, 
	\begin{align*}
		&\zeta(\alpha)<0,\\
		\iff& \frac{-L}{\left(L\sqrt{\frac{-\lambda_{0}}{2}}\right)\tanh\left(L\sqrt{\frac{-\lambda_{0}}{2}}\right)+\left(L\sqrt{\frac{-\lambda_{0}}{2}}\right)^2\text{sech}^2\left(L\sqrt{\frac{-\lambda_{0}}{2}}\right)}>\frac{1}{\alpha(1-C)},\\
		\iff& \frac{\left(L\sqrt{\frac{-\lambda_{0}}{2}}\right)\tanh\left(\sqrt{L\frac{-\lambda_{0}}{2}}\right)(1-C)}{\left(L\sqrt{\frac{-\lambda_{0}}{2}}\right)\tanh\left(L\sqrt{\frac{-\lambda_{0}}{2}}\right)+\left(L\sqrt{\frac{-\lambda_{0}}{2}}\right)^2\text{sech}^2\left(L\sqrt{\frac{-\lambda_{0}}{2}}\right)}<1.
	\end{align*}
	This is always satisfied, as the lefthand side is always less than $1.$
\end{proof}                                               
\section{A Local Optimisation Result} 
The goal of this section is to show that there is a local maxima $\lambda_{a_1,a_2,c,S_1}$ at $(a_1,a_2,c,S_1)=(a_0,a_0,c_0,S).$ 
We use the expression (\ref{weakform}) for the weak form $\hat{h}_{\alpha,a_1,a_2,c,S_1},$ to obtain expressions for $\lambda_{a_1,a_2,c,S_1}$ and its derivatives. These derivatives allow us to find conditions on the Hessian matrix for $\lambda_{a_1,a_2,c,S_1}$ corresponding to a local maximum at $(a_1,a_2,c,S_1)=(a_0,a_0,c_0,S).$\\
\subsection{First Derivatives}
We introduce the following notation; let $v$ be any of the perturbation parameters $a_1,a_2,c$ or $S_1$, and let $f^v$ denote $\frac{\partial f}{\partial v}.$ Further define as follows 
\begin{align}\label{hdif}
	\bar{h}_{\alpha,a_1,a_2,c,S_1}^{d_v}[f,g]
	=&\sum_{i=1}^2\left[\left\langle\left(\frac{\partial}{\partial v}\left({\partial_i \mathscr{L}_{a_1,a_2,c,S_1}^{-1}}\right)\cdot \nabla\right)f,\left({\partial_i \mathscr{L}_{a_1,a_2,c,S_1}^{-1}}\cdot \nabla\right)g\right\rangle_{L^2(\Omega_0)}\right.\nonumber\\ &+\left.\left\langle\left({\partial_i \mathscr{L}_{a_1,a_2,c,S_1}^{-1}}\cdot \nabla\right)f,\left(\frac{\partial}{\partial v}\left({\partial_i \mathscr{L}_{a_1,a_2,c,S_1}^{-1}}\right)\cdot \nabla\right)g\right\rangle_{L^2(\Omega_0)}\right],\nonumber\\
	\tilde{h}_{\alpha,a_1,a_2,c,S_1}^{d_v}[f,g]=&\frac{S}{|\Gamma_{0}|}\sum_{i,j}\frac{\partial}{\partial v}\left(\frac{|\Gamma_{a_j,c,S_1}^{(i,j)}}{S_j}|\right)\langle f,g\rangle_{L^2(\Gamma^{(i,j)}_{0})},\nonumber\\
	\hat{h}_{\alpha,a_1,a_2,c,S_1}^{d_v}[f,g]=&\bar{h}_{\alpha,a_1,a_2,c,S_1}^{d_v}[f,g]+\alpha\tilde{h}_{\alpha,a_1,a_2,c,S_1}^{d_v}[f,g].
\end{align}
Then it follows from (\ref{eq:hnodif}) that 
\begin{align*}
	\frac{\partial}{\partial v}\left(\hat{h}_{\alpha,a_1,a_2,c,S_1}[\phi,\psi_{a_1,a_2,c,S}]\right)=\hat{h}^{d_v}_{\alpha,a_1,a_2,c,S_1}[\phi,\psi_{a_1,a_2,c,S_1}]+\hat{h}_{\alpha,a_1,a_2,c,S_1}[\phi,\psi^v_{a_1,a_2,c,S_1}].
\end{align*}
Then, observing from (\ref{weakform}) that
\begin{align*}
	\frac{\partial}{\partial v}\left(\lambda_{a_1,a_2,c,S_1}\langle\phi,\psi_{a_1,a_2,c,S_1}\rangle_{L^2(\Omega_0)}\right)=\frac{\partial}{\partial v}\left(\hat{h}_{\alpha,a_1,a_2,c,S_1}[\phi,\psi_{a_1,a_2,c,S_1}]\right),
\end{align*}
it then follows that
\begin{align}
	\label{lamfulldiv}
	\lambda^v_{a_1,a_2,c,S_1}\langle\phi,\psi_{a_1,a_2,c,S_1}\rangle_{L^2(\Omega_0)}+\lambda_{a_1,a_2,c,S_1}\langle\phi,\psi^v_{a_1,a_2,c,S_1}\rangle_{L^2(\Omega_0)}\nonumber\\=\hat{h}^{d_v}_{\alpha,a_1,a_2,c,S_1}[\phi,\psi_{a_1,a_2,c,S_1}]+\hat{h}_{\alpha,a_1,a_2,c,S_1}[\phi,\psi^v_{a_1,a_2,c,S_1}],
\end{align}
and so with $\phi=\psi_{a_1,a_2,c,S_1},$ we readily obtain
\begin{align}\label{lamdiv}
	{\lambda}^v_{a_1,a_2,c,S_1}=\hat{h}^{d_v}_{a_1,a_2,c,S_1}[\psi_{a_1,a_2,c,S_1}].
\end{align}
\begin{lem}\label{lamzero}
	$\lambda_0^v=0$ for $v=a_1,a_2,c,S_1.$
\end{lem}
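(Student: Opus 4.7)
The plan is to use the identity $\lambda_0^v = \hat{h}^{d_v}_{\alpha,a_0,a_0,c_0,S}[\psi_0]$ coming from (\ref{lamdiv}) and show that each of the three summands in the defining decomposition (\ref{hdif}) vanishes at the symmetric base point, exploiting the symmetry properties of $\psi_0$ listed in Corollary \ref{symprop}. At $(a_1,a_2,c,S_1)=(0,0,c_0,S)$ the map $\mathscr{L}^{-1}_{a_1,a_2,c,S_1}$ reduces to the identity, so $\partial_1\mathscr{L}^{-1}=(1,0)$ and $\partial_2\mathscr{L}^{-1}=(0,1)$ at the base point, and only the derivative factor $\partial_v(\partial_i\mathscr{L}^{-1})\big|_{\mathrm{base}}$ carries nontrivial information.

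First I would evaluate the Jacobian derivatives explicitly for each parameter. Differentiating the piecewise formula for $\mathscr{L}^{-1}_{a_1,a_2,c,S_1}$ at the base point yields:
\begin{itemize}
\item[$v=a_1$:] on $\Omega_0^+$, $\partial_{a_1}(\partial_1\mathscr{L}^{-1})=0$ and $\partial_{a_1}(\partial_2\mathscr{L}^{-1})=(-1/c_0,0)$; on $\Omega_0^-$ everything is zero.
\item[$v=a_2$:] symmetric, supported on $\Omega_0^-$.
\item[$v=c$:] on both halves, $\partial_c(\partial_1\mathscr{L}^{-1})=(-1/c_0,0)$ and $\partial_c(\partial_2\mathscr{L}^{-1})=(0,1/c_0)$.
\item[$v=S_1$:] on $\Omega_0^\pm$, $\partial_{S_1}(\partial_2\mathscr{L}^{-1})=(0,\mp 1/c_0)$ (opposite signs), and $\partial_{S_1}(\partial_1\mathscr{L}^{-1})=0$.
\end{itemize}
Plugging into the bulk form $\bar h^{d_v}$ in (\ref{hdif}), the $a_1$ case reduces to a multiple of $\langle\partial_1\psi_0,\partial_2\psi_0\rangle_{L^2(\Omega_0^+)}$, which is zero by property (V); the $a_2$ case vanishes by (VI). The $c$ case reduces to $\tfrac{2}{c_0}\bigl(\|\partial_2\psi_0\|^2_{L^2(\Omega_0)}-\|\partial_1\psi_0\|^2_{L^2(\Omega_0)}\bigr)$, which is zero by (IV). The $S_1$ case reduces to a multiple of $\|\partial_2\psi_0\|^2_{L^2(\Omega_0^+)}-\|\partial_2\psi_0\|^2_{L^2(\Omega_0^-)}$, which vanishes by (VIII).

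For the boundary form $\tilde h^{d_v}$, I would exploit property (III), namely that $\|\psi_0\|^2_{L^2(\Gamma_0^{(i,j)})}$ is independent of $(i,j)$, so the boundary sum factors as $\|\psi_0\|^2_{L^2(\Gamma_0^{(1,1)})}$ times $\sum_{i,j}\partial_v\bigl(|\Gamma_{a_j,c,S_1}^{(i,j)}|/S_j\bigr)$ evaluated at the base point. A direct differentiation of $|\Gamma_{a_j,c,S_1}^{(i,j)}|=\sqrt{S_j^2/c^2+(a_j+(-1)^{i+1}c)^2}$ shows: for $v=a_1$ the only surviving terms are on $j=1$ and give $(-1)^{i+1}c_0/\sqrt{2S}$, which cancels in the sum over $i\in\{1,2\}$ (similarly for $a_2$); for $v=c$, evaluation at the base point yields $-2S^2/c^3+2(-1)^{i+1}((-1)^{i+1}c)=-2S+2S=0$ inside the square root derivative for each $(i,j)$; for $v=S_1$, the two nonzero contributions satisfy $\partial_{S_1}(|\Gamma^{(i,1)}|/S_1)=-\partial_{S_2}(|\Gamma^{(i,2)}|/S_2)$ at the base, and since the $a_j$, $S_j$, $c$ values coincide on both halves, these contributions cancel pairwise.

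The main obstacle is organizational rather than conceptual: one must carefully track the piecewise nature of $\mathscr{L}^{-1}$ across $\Omega_0^+$ and $\Omega_0^-$ and correctly pair each bulk/boundary summand with the right symmetry identity from Corollary \ref{symprop}. The underlying reason all first derivatives vanish is that $(a_0,a_0,c_0,S)$ is a fixed point for the reflection symmetries $x\mapsto -x$, $y\mapsto -y$, and $(S_1,a_1)\leftrightarrow(S_2,a_2)$, under which the quadratic form is invariant; a cleaner alternative would be to invoke these symmetries abstractly, but the parameter-by-parameter computation is needed anyway for the second-order analysis in the sequel.
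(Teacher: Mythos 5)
Your proposal is correct and follows essentially the same route as the paper: apply (\ref{lamdiv}), differentiate the Jacobian and boundary-length factors at the base point (the paper relegates this to Lemma \ref{hd1v2} in the appendix), and invoke the symmetries of Corollary \ref{symprop}. There are two harmless constant slips --- $\partial_{S_1}(\partial_2\mathscr{L}^{-1})$ at the base point is $(0,\mp 1/S)$ rather than $(0,\mp 1/c_0)$, and the $v=c$ boundary cancellation is $-2c_0+2c_0$ rather than $-2S+2S$ --- but neither affects the sign structure on which the vanishing rests.
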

\begin{proof}
	Step 1. When $(a_1,a_2,c,S_1)=(0,0,c_0,S)$ the formulae (\ref{hdif}) simplify as follows (see Lemma \ref{hd1v2} in the appendix for intermediary details)
	\begin{align*}
		\hat{h}_{\alpha,a_1,a_2,c}^{d_{a_1}}[\psi_{0}]=&-\frac{2c_0}{S}\langle\partial_1\psi_{0},\partial_2\psi_{0}\rangle_{L^2(\Omega_{0}^+)}\\
		+&\alpha\frac{c_0}{|\Gamma_{0}|^2}\left(||\psi_0||^2_{L^2(\Gamma_{0}^{(1,1)})}-||\psi_{0}||^2_{L^2(\Gamma_{0}^{(2,1)})}\right)\\
		\hat{h}_{\alpha,a_1,a_2,c}^{d_{a_2}}[\psi_{0}]=&2\frac{c_0}{S}\langle\partial_1\psi_{0},\partial_2\psi_{0}\rangle_{L^2(\Omega_{0}^-)}\\
		+&\alpha\frac{c_0}{|\Gamma_{0}|^2}\left(||\psi_{0}||^2_{L^2(\Gamma_{0}^{(1,2)})}-||\psi_{0}||^2_{L^2(\Gamma_{0}^{(2,2)})}\right)\\
		\hat{h}_{\alpha,a_1,a_2,c}^{d_{c}}[\psi_{0}]=&
		\frac{2}{c_0}\left(||\partial_2\psi_{0}||^2_{L^2(\Omega_{0})}-||\partial_1\psi_{0}||_{L^2(\Omega_0)}\right)\\
		+&\alpha\frac{-\frac{S^2}{c_0^3}+c_0}{|\Gamma_{0}|^2}||\psi_{0}||^2_{L^2(\partial\Omega_0)}\\
		\hat{h}_{\alpha,a_1,a_2,c}^{d_{S_1}}[\psi_{0}]=&\frac{2}{S}\left(||\partial_2\psi_{0}||^2_{L^2(\Omega_0^-)}-||\partial_2\psi_{0}||^2_{L^2(\Omega_0^+)}\right)\\
		+&\frac{\alpha}{|\Gamma_0|}\left(\frac{S}{|\Gamma_0|c_0^2}-\frac{|\Gamma_0|}{S}\right)\left(||\psi_{0}||^2_{L^2(\Gamma_{0})}+||\psi_{0}||^2_{L^2(\Gamma_{0})}\right.\\
		-&\left.||\psi_{0}||^2_{L^2(\Gamma_{0}^{(1,2)})}-||\psi_{0}||^2_{L^2(\Gamma_{0}^{(2,2)})}\right).
	\end{align*}
	Step 2. Recall from Corollary \ref{symprop} the following symmetry properties of $\psi_0:$\\
	$\langle\partial_1\psi_{0},\partial_2\psi_{0}\rangle_{L^2(\Omega_0^+)}=0,$\\ 
	$\langle\partial_1\psi_{0},\partial_2\psi_{0}\rangle_{L^2(\Omega_0^-)}=0,$  \\                                                                 
	$||\psi_{0}||_{L^2(\Gamma_{0}^{i,1})}=||\psi_{0}||_{L^2(\Gamma_{0}^{j,1})},$ \\
	$||\psi_{0}||_{L^2(\Gamma_{0}^{1,i})}=||\psi_{0}||_{L^2(\Gamma_{0}^{1,j})},$ \\
	$||\partial_1\psi_{0}||_{L^2(\Omega_0)}=||\partial_2\psi_{0}||_{L^2(\Omega_0)},$ \\
	$||\partial_2\psi_{0}||_{L^2(\Omega_0^+)}=||\partial_2\psi_{0}||_{L^2(\Omega_0^-)}.$ \\
	Step 3. Applying the symmetry properties from step 2 to the formulae in step 1, recalling that $S^2/c_0^3=c_0$ and applying equation (\ref{lamdiv}) leads to the desired result.                                                         \end{proof}
\subsection{Second Derivatives}
We introduce the following notation.
Let\begin{align}\label{dd1}
\hat{h}_{\alpha,a_1,a_2,c,S_1}^{d_{v_1},d_{v_2}}[\psi_{a_1,a_2,c,S_1},\psi_{a_1,a_2,c,S_1}]=&\bar{h}_{\alpha,a_1,a_2,c,S_1}^{d_{v_1},d_{v_2}}[\psi_{a_1,a_2,c,S_1},\psi_{a_1,a_2,c,S_1}]\nonumber\\
+&\tilde{h}_{\alpha,a_1,a_2,c,S_1}^{d_{v_1},d_{v_2}}[\psi_{a_1,a_2,c,S_1},\psi_{a_1,a_2,c,S_1}],
\end{align}
where 
\begin{align}\label{dd2}
\bar{h}^{d_{v_1}d_{v_2}}_{\alpha,a_1,a_2,c,S_1}[f,g]=&\sum_{i=1}^2\left[\left\langle\left(\frac{\partial^2}{\partial v_1\partial v_2}\left({\partial_i \mathscr{L}_{a_1,a_2,c,S_1}^{-1}}\right)\cdot \nabla\right)f,\left({\partial_i \mathscr{L}_{a_1,a_2,c,S_1}^{-1}}\cdot \nabla\right)g\right\rangle_{L^2(\Omega_0)}\right.\nonumber\\
+&\left.\left\langle\left(\frac{\partial}{\partial v_1}\left({\partial_i \mathscr{L}_{a_1,a_2,c,S_1}^{-1}}\right)\cdot \nabla\right)f,\left(\frac{\partial}{\partial v_2}\left({\partial_i \mathscr{L}_{a_1,a_2,c,S_1}^{-1}}\right)\cdot \nabla\right)g\right\rangle_{L^2(\Omega_0)}\right.\nonumber\\
+&\left.\left\langle\left(\frac{\partial}{\partial v_2}\left({\partial_i \mathscr{L}_{a_1,a_2,c,S_1}^{-1}}\right)\cdot \nabla\right)f,\left(\frac{\partial}{\partial v_1}\left({\partial_i \mathscr{L}_{a_1,a_2,c,S_1}^{-1}}\right)\cdot \nabla\right)g\right\rangle_{L^2(\Omega_0)}\right.\nonumber\\ +&\left.\left\langle\left({\partial_i \mathscr{L}_{a_1,a_2,c,S_1}^{-1}}\cdot \nabla\right)f,\left(\frac{\partial^2}{\partial v_1\partial v_2}\left({\partial_i \mathscr{L}_{a_1,a_2,c,S_1}^{-1}}\right)\cdot \nabla\right)g\right\rangle_{L^2(\Omega_0)}\right],
\end{align}
and
\begin{align}\label{dd3}
\tilde{h}^{d_{v_1},d_{v_2}}_{\alpha,a_1,a_2,c,S_1}[f,g]=\frac{S}{|\Gamma_{0}|}\sum_{i,j}\frac{\partial^2}{\partial v_1\partial v_2}\left(\frac{|\Gamma_{a_j,c,S_1}^{(i,j)}|}{S_j}\right)\langle f,g\rangle_{L^2(\Gamma^{(i)}_{0})}.
\end{align}
Then, differentiating (\ref{lamdiv}), we have
\begin{align}\label{lamdiv2}
{\lambda}^{v_1,v_2}_{a_1,a_2,c,S_1}=2\hat{h}_{a_1,a_2,c,S_1}^{d_{v_1}}[\psi_{a_1,a_2,c,S_1},\psi_{a_1,a_2,c,S_1}^{v_2}]+\hat{h}_{a_1,a_2,c,S_1}^{d_{v_1},d_{v_2}}[\psi_{a_1,a_2,c,S_1},\psi_{a_1,a_2,c,S_1}].
\end{align}

\begin{lem}\label{eigprop}
The following are true:
\begin{enumerate}[label=(\Roman*)]
	\item
	\begin{align*}
		{\lambda}^{v_1,v_2}_{a_1,a_2,c,S_1}=&2(\lambda_{a_1,a_2,c,S_1}^{v_1}\langle\psi_{a_1,a_2,c,S_1},\psi^{v_2}_{a_1,a_2,c,S_1}\rangle+\lambda_{a_1,a_2,c,S_1}\langle\psi_{a_1,a_2,c,S_1}^{v_1},\psi^{v_2}_{a_1,a_2,c,S_1}\rangle\\-&\hat{h}_{\alpha,a_1,a_2,c,S_1}[\psi^{v_1}_{a_1,a_2,c,S_1},\psi^{v_2}_{a_1,a_2,c,S_1}])
		+\hat{h}_{\alpha,a_1,a_2,c,S_1}^{d_{v_1},d_{v_2}}[\psi_{a_1,a_2,c,S_1},\psi_{a_1,a_2,c,S_1}],
	\end{align*}
	\item 
	\begin{align*}
		{\lambda}^{v_1,v_2}_{0}=&2(\lambda_{0}\langle\psi_{0}^{v_1},\psi^{v_2}_{0}\rangle-\hat{h}_{\alpha}[\psi^{v_1}_{0},\psi^{v_2}_{0}])+\hat{h}_{\alpha}^{d_{v_1},d_{v_2}}[\psi_{0},\psi_{0}],
	\end{align*}
	\item 
	\begin{align*}
		{\lambda}^{v,v}_{0}=&2(\lambda_{0}||\psi_{0}^{v}||^2_{L^2(\Omega_0)}-\hat{h}_{\alpha}[\psi^{v}_{0}])+\hat{h}_{\alpha}^{d_{v},d_{v}}[\psi_{0}],
	\end{align*}
	\item If $\hat{h}_{\alpha}^{d_{v},d_{v}}[\psi_{0}]<0,$ then ${\lambda}^{v,v}_{0}<0.$
\end{enumerate}
\end{lem}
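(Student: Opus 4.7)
The plan is to prove the four items sequentially, each leaning on the previous one plus identities already established in the excerpt.

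For part (I), the starting point is equation (\ref{lamdiv2}), which expresses $\lambda^{v_1,v_2}$ in terms of $\hat{h}^{d_{v_1}}[\psi,\psi^{v_2}]$ and $\hat{h}^{d_{v_1},d_{v_2}}[\psi,\psi]$. The idea is to exploit the fact that equation (\ref{lamfulldiv}) is a relation valid for every test function $\phi \in H^1(\Omega_0)$. Specifically, I would substitute $\phi = \psi^{v_2}_{a_1,a_2,c,S_1}$ into (\ref{lamfulldiv}), which immediately yields
\begin{align*}
\hat{h}^{d_{v_1}}_{\alpha,a_1,a_2,c,S_1}[\psi_{a_1,a_2,c,S_1},\psi^{v_2}_{a_1,a_2,c,S_1}] = & \lambda^{v_1}_{a_1,a_2,c,S_1}\langle \psi_{a_1,a_2,c,S_1},\psi^{v_2}_{a_1,a_2,c,S_1}\rangle \\ & +\lambda_{a_1,a_2,c,S_1}\langle \psi^{v_1}_{a_1,a_2,c,S_1},\psi^{v_2}_{a_1,a_2,c,S_1}\rangle \\ & -\hat{h}_{\alpha,a_1,a_2,c,S_1}[\psi^{v_1}_{a_1,a_2,c,S_1},\psi^{v_2}_{a_1,a_2,c,S_1}].
\end{align*}
Feeding this back into (\ref{lamdiv2}) gives (I) after multiplying through by the factor of two.

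For part (II), I would simply evaluate (I) at $(a_1,a_2,c,S_1) = (0,0,c_0,S)$ and invoke Lemma \ref{lamzero}, which tells us $\lambda_0^{v_1} = 0$, killing the first summand. Part (III) is then the specialisation $v_1 = v_2 = v$ of (II), for which the cross terms $\langle \psi_0^{v_1},\psi_0^{v_2}\rangle$ and $\hat{h}_\alpha[\psi_0^{v_1},\psi_0^{v_2}]$ collapse to $\|\psi_0^v\|_{L^2(\Omega_0)}^2$ and $\hat{h}_\alpha[\psi_0^v]$ respectively.

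For part (IV), rewrite (III) as $\lambda_0^{v,v} = 2\bigl(\lambda_0\|\psi_0^v\|^2_{L^2(\Omega_0)} - \hat{h}_\alpha[\psi_0^v]\bigr) + \hat{h}_\alpha^{d_v,d_v}[\psi_0]$. Since the operator $\hat{H}_\alpha$ on $\Omega_0$ is isospectral to $H_\alpha$ on $\Omega_{a_1,a_2,c,S_1}$ (at the base point these are the same operator), the Rayleigh variational characterisation (\ref{eq:eig1}) applied to the associated form gives $\hat{h}_\alpha[\phi] \geq \lambda_0\|\phi\|^2_{L^2(\Omega_0)}$ for every $\phi \in H^1(\Omega_0)$. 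Thus the bracketed quantity is non-positive, and adding a strictly negative term $\hat{h}_\alpha^{d_v,d_v}[\psi_0]$ yields $\lambda_0^{v,v} < 0$.

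There is no significant obstacle here: parts (I)–(III) are algebraic manipulations of derivatives of the weak form, and (IV) reduces to the Rayleigh minimum principle applied in a form that remains valid even though $\lambda_0$ is negative. The only subtle point worth flagging is making sure to justify that the Rayleigh bound $\hat{h}_\alpha[\phi] \geq \lambda_0 \|\phi\|^2$ indeed still holds with the correct direction of inequality when $\lambda_0 < 0$, which it does since $\lambda_0$ is by definition the infimum of the Rayleigh quotient.
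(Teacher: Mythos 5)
Your proposal is correct and follows essentially the same route as the paper: substitute $\phi=\psi^{v_2}_{a_1,a_2,c,S_1}$ into (\ref{lamfulldiv}), feed the resulting identity into (\ref{lamdiv2}) to get (I), specialise via Lemma \ref{lamzero} and $v_1=v_2=v$ for (II)--(III), and conclude (IV) from the variational characterisation $\hat{h}_\alpha[\psi_0^v]\ge\lambda_0\|\psi_0^v\|^2_{L^2(\Omega_0)}$. Your added remark that the Rayleigh bound is unaffected by the sign of $\lambda_0$ is a harmless clarification the paper leaves implicit.
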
 
\begin{proof}
\begin{enumerate}[label=(\Roman*)]
	\item Recall from (\ref{lamfulldiv}) that 
	\begin{align*}
		&\lambda_{a_1,a_2,c,S_1}^{v_1}\langle\psi_{a_1,a_2,c,S_1},\phi\rangle+\lambda_{a_1,a_2,c,S_1}\langle\psi_{a_1,a_2,c,S_1}^{v_1},\phi\rangle\\
		=&\hat{h}^{d_{v_1}}_{\alpha,a_1,a_2,c,S_1}[\phi,\psi_{a_1,a_2,c,S_1}]+\hat{h}_{\alpha,a_1,a_2,c,S_1}[\phi,\psi^{v_1}_{a_1,a_2,c,S_1}],
	\end{align*}
	and so setting $\phi=\psi^{v_2}_{a_1,a_2,c_0,S_1}$ we have
	\begin{align*}
		&\lambda_{a_1,a_2,c,S_1}^{v_1}\langle\psi_{a_1,a_2,c,S_1},\psi^{v_2}_{a_1,a_2,c,S_1}\rangle+\lambda_{a_1,a_2,c,S_1}\langle\psi_{a_1,a_2,c,S_1}^{v_1},\psi^{v_2}_{a_1,a_2,c}\rangle\\
		=&\hat{h}^{d_{v_1}}_{\alpha,a_1,a_2,c,S_1}[\psi^{v_2}_{a_1,a_2,c,S_1},\psi_{a_1,a_2,c,S_1}]+\hat{h}_{\alpha,a_1,a_2,c,S_1}[\psi^{v_1}_{a_1,a_2,c,S_1},\psi^{v_2}_{a_1,a_2,c,S_1}].
	\end{align*}
	Hence, using (\ref{lamdiv2}) we have
	\begin{align*}
		{\lambda}^{v_1,v_2}_{a_1,a_2,c,S_1}=&2(\lambda_{a_1,a_2,c,S_1}^{v_1}\langle\psi_{a_1,a_2,c,S_1},\psi^{v_2}_{a_1,a_2,c,S_1}\rangle+\lambda_{a_1,a_2,c,S_1}\langle\psi_{a_1,a_2,c,S_1}^{v_1},\psi^{v_2}_{a_1,a_2,c,S_1}\rangle\\-&\hat{h}_{\alpha,a_1,a_2,c,S_1}[\psi^{v_1}_{a_1,a_2,c,S_1},\psi^{v_2}_{a_1,a_2,c,S_1}])
		+\hat{h}_{\alpha,a_1,a_2,c}^{d_{v_1},d_{v_2}}[\psi_{a_1,a_2,c},\psi_{a_1,a_2,c}],
	\end{align*}\\
	\item Follows from recalling that $\lambda_{0}^{v_1}=0,$ and applying (I).\\
	\item Apply (II) with the choice $v_1=v_2=v$.\\
	\item Follows from (III) once one recalls, using the variational characterisation (\ref{eq:eig1}), that $\hat{h}_\alpha{[\psi_{0}^v]}\ge\lambda_{0}||\psi_{0}^v||^2_{L^2(\Omega_0)}.$ \\
\end{enumerate}
\end{proof}                                      
\begin{lem}\label{ddexplicit}
Explicitly, we have  for $j=1,2$                                        \begin{enumerate}[label=(\Roman*)]
	\item 
	\begin{align*}
		\lambda_0^{a_j,a_j}=&2\left(\lambda_{0}||\psi_{0}^{a_j}||^2_{L^2(\Omega_0)}-\hat{h}_{\alpha}[\psi^{a_j}_{0}]\right)+\frac{1}{2S}||\nabla\psi_{0}||^2_{L^2(\Omega_0)}+\alpha\frac{1}{8S}||\psi_{0}||^2_{L^2(\partial\Omega_0)}
	\end{align*}
	\item
	\begin{align*}
		\lambda_{0}^{c,c}=&2\left(\lambda_{0}||\psi_{0}^{c}||^2_{L^2(\Omega_0)}-\hat{h}_{\alpha}[\psi^{c}_{0}]\right)+\frac{4}{S}||\nabla\psi_{0}||^2_{L^2(\Omega_0)}+\alpha\frac{2}{S}||\psi_{0}||^2_{L^2(\partial\Omega_{0})}
	\end{align*}
	\item 
	\begin{align*}
		\lambda_{0}^{S_1,S_1}=&2\left(\lambda_{0}||\psi_{0}^{S_1}||^2_{L^2(\Omega_0)}-\hat{h}_{\alpha}[\psi^{S_1}_{0}]\right)+\frac{3}{S^2}||\nabla\psi_{0}||^2_{L^2(\Omega_0)}+\frac{5\alpha}{4S^2}||\psi_{0}||^2_{L^2(\partial\Omega_{0})}
	\end{align*}
	\item \begin{align*}
		\lambda_0^{a_1,a_2}=&-2\frac{c_0}{S}\left(\langle\partial_1\psi_{0},\partial_2\psi_{0}^{a_2}\rangle_{L^2(\Omega_0^+)} +\langle\partial_1\psi_0^{a_2},\partial_2\psi_{0}\rangle_{L^2(\Omega_0^+)}\right)\\
		+&2\frac{\alpha c_0}{|\Gamma_{0}|^2}\left(\langle\psi_{0},\psi_{0}^{a_2}\rangle_{L^2(\Gamma_{0}^{(1,1)})}-\langle\psi_{0},\psi_{0}^{a_2}\rangle_{L^2(\Gamma_{0}^{(2,1)})}\right)
	\end{align*}                                               \item 
	\begin{align*}
		\lambda_{0}^{a_j,c}=0
	\end{align*}
	\item 
	\begin{align*}
		\lambda_{0}^{S_1,a_j}=0
	\end{align*}
	\item 
	\begin{align*}
		\lambda_{0}^{S_1,c}=0
	\end{align*}
\end{enumerate}
\end{lem}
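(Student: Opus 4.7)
The plan is to invoke Lemma \ref{eigprop} and reduce each identity to an evaluation of $\hat{h}_{\alpha,a_1,a_2,c,S_1}^{d_{v_1},d_{v_2}}[\psi_0,\psi_0]$ at the reference point $(0,0,c_0,S)$, combined with the symmetry properties of $\psi_0$ recorded in Corollary \ref{symprop}. The second partial derivatives of the entries of $\partial_i\mathscr{L}^{-1}_{a_1,a_2,c,S_1}$ and of $|\Gamma_{a_j,c,S_1}^{(i,j)}|/S_j$ that enter (\ref{dd2})--(\ref{dd3}) are routine but long; I would package them as an appendix lemma in the spirit of Lemma \ref{hd1v2}, so that the proof here is mainly assembly rather than calculation.

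For items (I)--(III) I apply Lemma \ref{eigprop}(III), which already isolates $\hat{h}_\alpha^{d_v,d_v}[\psi_0]$ as the only new quantity to compute. At the symmetric point the cross inner products $\langle\partial_1\psi_0,\partial_2\psi_0\rangle_{L^2(\Omega_0^\pm)}$ vanish, the half-domain gradient norms $\|\partial_i\psi_0\|^2_{L^2(\Omega_0^\pm)}$ all equal $\tfrac14\|\nabla\psi_0\|^2_{L^2(\Omega_0)}$, and the four boundary norms $\|\psi_0\|^2_{L^2(\Gamma_0^{(i,j)})}$ each equal $\tfrac14\|\psi_0\|^2_{L^2(\partial\Omega_0)}$. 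Substituting the computed second partials and applying these collapses reduces $\hat{h}_\alpha^{d_v,d_v}[\psi_0]$ to the compact forms appearing in (I)--(III); the specific coefficients $\tfrac1{2S}$, $\tfrac{1}{8S}$, $\tfrac4S$, $\tfrac2S$, $\tfrac3{S^2}$, $\tfrac{5\alpha}{4S^2}$ arise directly from the second partials of the scalar coefficients in (\ref{explexpl}).

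For item (IV), I would first observe that every component of $\partial_i\mathscr{L}^{-1}$ depends on $a_1$ only through $\bchi_{y>0}$ terms and on $a_2$ only through $\bchi_{y<0}$ terms; similarly $|\Gamma_{a_1,c,S_1}^{(i,1)}|/S_1$ depends only on $a_1$ and $|\Gamma_{a_2,c,S_2}^{(i,2)}|/S_2$ only on $a_2$. Consequently every cross-partial summand in (\ref{dd2})--(\ref{dd3}) vanishes and $\hat{h}_\alpha^{d_{a_1},d_{a_2}}[\psi_0,\psi_0]=0$. Then (\ref{lamdiv2}) collapses to $\lambda_0^{a_1,a_2}=2\hat{h}_\alpha^{d_{a_1}}[\psi_0,\psi_0^{a_2}]$, which, via the polarised first-derivative identity from Lemma \ref{hd1v2} (replace one instance of $\psi_0$ by $\psi_0^{a_2}$), reproduces the stated identity. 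For (V)--(VII) I use Lemma \ref{eigprop}(II) together with parity considerations: $\psi_0^c$ inherits full evenness in $x$ and $y$, $\psi_0^{S_1}$ preserves $x$-symmetry but breaks $y$-symmetry, and $\psi_0^{a_j}$ breaks $x$-symmetry while localising asymmetrically in $y$. Checking these parities against those of each summand in (\ref{dd2})--(\ref{dd3}) and against $\hat{h}_\alpha[\psi_0^{v_1},\psi_0^{v_2}]$ shows that every contribution integrates to zero on the symmetric domain $\Omega_0$, giving the stated vanishing.

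The main obstacle is the combinatorics of the piecewise-defined map $\mathscr{L}^{-1}$: each $\partial_i\mathscr{L}^{-1}$ carries a $\bchi_{y>0},\bchi_{y<0}$ decomposition, so second partials in $(a_1,a_2,c,S_1)$ proliferate into many case-dependent summands, and one must track carefully which survive after applying Corollary \ref{symprop}. The parity arguments for (V)--(VII) are conceptually clean but require verifying simultaneously that the bulk inner products, the boundary inner products, and the second-partial coefficients all respect the relevant $\mathbb{Z}_2$-actions; the remaining work is arithmetic.
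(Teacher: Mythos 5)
Your proposal is correct and, for items (I)--(IV), is essentially the paper's own argument: reduce via Lemma \ref{eigprop}(III) and equation (\ref{lamdiv2}) to the second-partial forms $\hat h_\alpha^{d_{v},d_{v}}[\psi_0]$ and the polarised first-derivative form $\hat h_\alpha^{d_{a_1}}[\psi_0,\psi_0^{a_2}]$, compute these in an appendix lemma (the paper's Lemmas \ref{hd1v2}, \ref{2nddif} and \ref{explicddlam}), and collapse them with the symmetry identities of Corollary \ref{symprop}; your observation that $\hat h_\alpha^{d_{a_1},d_{a_2}}[\psi_0]=0$ because the $a_1$- and $a_2$-dependences live on disjoint halves of $\Omega_0$ and disjoint parts of $\partial\Omega_0$ is exactly the paper's (trivial) item (I)(v) of Lemma \ref{explicddlam}.

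The one place you genuinely deviate is (V)--(VII). The paper works directly from (\ref{lamdiv2}), so the only first-order term it must kill is $\hat h_\alpha^{d_{v_1}}[\psi_0,\psi_0^{v_2}]$, and for this it suffices to know that $\psi_0^{c}$ and $\psi_0^{S_1}$ are \emph{even} in $x$ (for (V), (VI)) and that $\psi_0^{c}$ is even in $y$ (for (VII)); the structure of $\hat h_\alpha^{d_{a_j}}$ and $\hat h_\alpha^{d_{S_1}}$ then does the rest. You instead use Lemma \ref{eigprop}(II), so you must show $\langle\psi_0^{v_1},\psi_0^{v_2}\rangle=\hat h_\alpha[\psi_0^{v_1},\psi_0^{v_2}]=0$, which requires the strictly stronger input that $\psi_0^{a_j}$ is genuinely \emph{odd} in $x$ and $\psi_0^{S_1}$ genuinely odd in $y$ (not merely ``non-symmetric''), together with the fact that $\hat h_\alpha$ at the symmetric point respects the parity decomposition. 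These oddness claims are true, but they need the small extra observation that $\psi_0^{v}$ solves $(\hat H_\alpha-\lambda_0)\psi_0^{v}=-\hat H_\alpha^{d_v}\psi_0$ with $\lambda_0^v=0$ and $\psi_0^v\perp\psi_0$, and that the reduced resolvent commutes with the reflections of $\Omega_0$, so $\psi_0^v$ inherits the parity of the source term. Your route buys a more uniform ``everything vanishes by a $\mathbb{Z}_2$-action'' picture; the paper's buys a shorter verification with weaker parity hypotheses. Either way the conclusion stands, so there is no gap --- just be sure to actually record and justify the oddness of $\psi_0^{a_j}$ and $\psi_0^{S_1}$ if you take your route, since Corollary \ref{symprop} only addresses $\psi_0$ itself.
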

\begin{proof}
(I-III) Apply Lemma \ref{eigprop} alongside the computations found in Lemma \ref{explicddlam} in the appendix.\\
(IV) Apply equation (\ref{lamdiv2}) alongside the computations found in Lemma \ref{explicddlam} in the appendix.\\
(V)  We have by  (\ref{lamdiv2}) and Lemma \ref{explicddlam}
\begin{align*}
	{\lambda}^{a_j,c}_{0}=&2\hat{h}_{\alpha}^{d_{a_j}}[\psi_{0},\psi_{0}^{c}]+\hat{h}_{\alpha}^{d_{a_j},d_{c}}[\psi_{0}],\\
	=&(-1)^j\frac{2c_0}{S}\left(\langle\partial_1\psi_{0},\partial_2\psi_{0}^c\rangle_{L^2(\Omega_0\cap\lbrace (-1)^jy<0\rbrace)}+\langle\partial_1\psi_0^c,\partial_2\psi_{0}\rangle_{L^2(\Omega_0\cap\lbrace (-1)^jy<0\rbrace)}  \right)\\
	&+\frac{2\alpha c_0}{|\Gamma_{0}|^2}\left(\langle\psi_{0},\psi_0^c\rangle_{L^2(\Gamma_{0}^{(1,j)})}-\langle\psi_{0},\psi_0^c\rangle_{L^2(\Gamma_{0}^{(2,j)})}\right)\\
	=&0,
\end{align*}
where we have made use of the observation that $\psi_0^c$ is even in $x.$\\
(VI) We have again by (\ref{lamdiv2}) and Lemma \ref{explicddlam}
\begin{align*}
	{\lambda}^{a_j,S_1}_{0}=&2\hat{h}_{\alpha}^{d_{a_j}}[\psi_{0},\psi_{0}^{S_1}]+\hat{h}_{\alpha}^{d_{a_j},d_{S_1}}[\psi_{0}],\\
	=&(-1)^j\frac{2c_0}{S}\left(\langle\partial_1\psi_{0},\partial_2\psi_{0}^{S_1}\rangle_{L^2(\Omega_0\cap\lbrace (-1)^jy<0\rbrace)}+\langle\partial_1\psi_0^{S_1},\partial_2\psi_{0}\rangle_{L^2(\Omega_0\cap\lbrace (-1)^jy<0\rbrace)}  \right)\\
	&+\frac{2\alpha c_0}{|\Gamma_{0}|^2}\left(\langle\psi_{0},\psi_0^{S_1}\rangle_{L^2(\Gamma_{0}^{(1,j)})}-\langle\psi_{0},\psi_0^{S_1}\rangle_{L^2(\Gamma_{0}^{(2,j)})}\right)\\
	=&0,
\end{align*}
where we have used the fact that $\psi_{0}^{S_1}$ is even in $x.$\\
(VII) Using (\ref{lamdiv2}) and Lemma \ref{explicddlam} we have
\begin{align*}
	\lambda_{0}^{S_1,c}=&\frac{4}{S}\left(\langle\partial_2\psi_{0},\partial_2\psi_0^{c}\rangle_{L^2(\Omega_{0}^-)}-\langle\partial_2\psi_{0},\partial_2\psi_0^{c}\rangle_{L^2(\Omega_{0}^+)}\right)\\
	&+2\frac{\alpha S}{|\Gamma_{0}|}\left(\frac{1}{|\Gamma_{0}|c_0^2}-\frac{|\Gamma_{0}|}{S^2}\right)\left(\langle\psi_{0},\psi_0^{c}\rangle_{L^2(\Gamma_{0}^{(1,1)})}+\langle\psi_{0},\psi_0^{c}\rangle_{L^2(\Gamma_{0}^{(2,1)})}\right.\\
	&\phantom{=}\left.-\langle\psi_{0},\psi_0^{c}\rangle_{L^2(\Gamma_{0}^{(1,2)})}-\langle\psi_{0},\psi_0^{c}\rangle_{L^2(\Gamma_{0}^{(2,2)})}\right)\\
	=&0,
\end{align*}
since $\psi_0^c$ is even in $y.$
\end{proof}

\begin{proof}[Proof of Theorem 1]

Recall that $\lambda_{0}$ is a local maximum if it is a critical point and the Hessian matrix is negative definite.\\
The Hessian matrix for $\lambda_{0}$ is 
\begin{align*}
	H_{\lambda_0}=\left[\begin{matrix}
		\lambda_0^{a_1,a_1} & \lambda_0^{a_1,a_2} & \lambda_0^{a_1,c} &
		\lambda_0^{a_1,S_1} \\
		\lambda_0^{a_2,a_1} & \lambda_0^{a_2,a_2} & \lambda_0^{a_2,c} &
		\lambda_0^{a_2,S_1}\\
		\lambda_0^{c,a_1} & \lambda_0^{c,a_2} & \lambda_0^{c,c} &
		\lambda_0^{c,S_1} \\
		\lambda_0^{S_1c,a_1} & \lambda_0^{S_1,a_2} & \lambda_0^{S_1,c} &
		\lambda_0^{S_1,S_1}
	\end{matrix}\right].
\end{align*}
From Lemma \ref{ddexplicit} we have for $j=1,2,$  $\lambda_0^{c,a_j}=\lambda_{0}^{a_j,S_1}=\lambda_{0}^{c,S_1}=0,$  and thus $H_{\lambda_0}$ takes the form of a block diagonal matrix
\begin{align*}
	H_{\lambda_0}=\left[\begin{matrix}
		\lambda_0^{a_1,a_1} & \lambda_0^{a_1,a_2} & 0 &
		0\\
		\lambda_0^{a_2,a_1} & \lambda_0^{a_2,a_2} & 0 &
		0\\
		0 & 0 & \lambda_0^{c,c} &
		0 \\
		0 & 0 & 0 &
		\lambda_0^{S_1,S_1}
	\end{matrix}\right],
\end{align*}                                                  and hence its eigenvalues $\mu_i$ are the eigenvalues of the diagonal blocks.  
	Hence, $\mu_1=\lambda_{0}^{c,c}$ and $\mu_2=\lambda_{0}^{S_1,S_1}.$\\
	By Lemma  \ref{eigprop} we see that $\lambda_{0}^{v,v}<0$ if $\hat{h}_\alpha^{d_v,d_v}[\psi_{0}]<0.$ But by comparing Lemmas \ref{eigprop} and \ref{ddexplicit} we see that for $v=a_1,a_2,c,S_1,$  we have  $\hat{h}_\alpha^{d_v,d_v}[\psi_{0}]=B(||\nabla\psi_{0}||^2_{L_2(\Omega_{0})}+\alpha C||\psi_{0}||^2_{L^2(\partial\Omega_{0})}),$ for some $0<B,$ $0<C<1$ depending upon $v.$ By Lemma \ref{extre} we have        $||\nabla\psi_{0}||^2_{L_2(\Omega_{0})}+\alpha C||\psi_{0}||^2_{L^2(\partial\Omega_{0})}<0,$ and so   $\lambda_{0}^{v,v}<0.$\\
	This implies that we have $\mu_1,\mu_2<0.$ \\
	The other eigenvalues are the eigenvalues of the upper left block matrix, whose trace and determinant give the following for the sum and product of eigenvalues
	\begin{align}
		\mu_3+\mu_4=&\lambda_{0}^{a_1,a_1}+\lambda_{0}^{a_2,a_2},\label{eq1}\\
		\mu_3\mu_4=&\lambda_{0}^{a_1,a_1}\lambda_{0}^{a_2,a_2}-\left(\lambda_{0}^{a_1,a_2}\right)^2.\nonumber
	\end{align}
	Since $\lambda_{0}^{a_j,a_j}<0,$ for $j=1,2$ (\ref{eq1}) implies at least one of $\mu_3$ or $\mu_4$ is strictly negative. Thus for both $\mu_3<0$ and $\mu_4<0$ we need $\mu_3\mu_4>0.$ But
	\begin{align*}
		\lambda_{0}^{a_1,a_1}\lambda_{0}^{a_2,a_2}-\left(\lambda_{0}^{a_1,a_2}\right)^2=&\left(\frac{1}{2S}||\nabla\psi_{0}||^2_{L^2(\Omega_0)}+\alpha\frac{1}{8S}||\psi_{0}||^2_{L^2(\partial\Omega_0)}\right)^2\\
		-&2\left(\frac{1}{2S}||\nabla\psi_{0}||^2_{L^2(\Omega_0)}+\alpha\frac{1}{8S}||\psi_{0}||^2_{L^2(\partial\Omega_0)}\right)\left(\mathscr{G}[\psi_0^{a_1},\psi_0^{a_1}]+\mathscr{G}[\psi_0^{a_2},\psi_0^{a_2}]\right)\\   
		+&        4(\mathscr{G}[\psi_0^{a_1},\psi_0^{a_1}]\mathscr{G}[\psi_0^{a_2},\psi_0^{a_2}]          -\mathscr{G}[\psi_0^{a_1},\psi_0^{a_2}]^2),
	\end{align*}
	where $\mathscr{G}$ is given by $\mathscr{G}[f,g]=\hat{h}_\alpha[f,g]-\lambda_{0}\langle f,g\rangle_{L^2(\Omega_{0})}.$
	Clearly the first two terms are strictly positive and the last term can be seen to be non-negative once one observes that $\mathscr{G}$ is a positive semi-definite Hermitian form, and hence the Cauchy-Schwarz inequality holds for  $\mathscr{G}.$\\
	Since $\lambda_{0}$ is a local maximum in the parameter space $(a_1,a_2,c,S_1)$ it is also a local maximum with respect to the Hausdorf metric. \end{proof}
\section{Quadrilateral Global Results}
This section collects various asymptotic results for small and large negative $\alpha,$ provable via test function arguments. The approach is similar to that of \cite{krejvcivrik2023reverse}, but adapted to quadrilaterals. In particular these results will prove Theorems \ref{thm2} and \ref{thm3}. \\
Again, throughout this section we make the normalisation $||\psi_0||^2_{L^2(\Omega_0)}=1.$                                                                                                                                                                                                                                                 \begin{prop}\label{smalph}
	Let $\Omega_{a_1,a_2,c,S_1}$ be a quadrilateral other than the square. Then there exists a negative constant $\alpha_c,$ depending only on $a_1,a_2,c,S_1$ and $S$ such that $\lambda_{a_1,a_2,c,S_1}<\lambda_{0}$ for all $\alpha$ satisfying $\alpha_c\leq\alpha<0.$
\end{prop}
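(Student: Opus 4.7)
The plan is a trial-function argument combined with the asymptotic expansion of $\lambda_0$ near $\alpha=0$. Taking the constant test function $u\equiv 1$ in the variational characterisation (\ref{eq:eig1}) applied to $\Omega_{a_1,a_2,c,S_1}$ gives immediately
\begin{align*}
\lambda_{a_1,a_2,c,S_1} \leq \frac{\alpha\,|\partial\Omega_{a_1,a_2,c,S_1}|}{2S},
\end{align*}
and the classical isoperimetric inequality for quadrilaterals of fixed area shows $\delta := |\partial\Omega_{a_1,a_2,c,S_1}| - |\partial\Omega_0| > 0$, since $\Omega_{a_1,a_2,c,S_1}$ is assumed not to be the square (recall $|\partial\Omega_0|=4\sqrt{2S}$).

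Next I would expand $\lambda_0$ to first order about $\alpha=0$. At $\alpha=0$ the problem is Neumann, so $\lambda_0=0$ and the normalised first eigenfunction is $\psi_0\equiv 1/\sqrt{2S}$. The Hadamard formula $(\lambda_0)'_\alpha=||\psi_0||^2_{L^2(\partial\Omega_0)}$, recorded inside the proof of Lemma \ref{extre}, therefore yields $(\lambda_0)'_\alpha|_{\alpha=0} = |\partial\Omega_0|/(2S)$; alternatively, the same expansion is immediate from Proposition \ref{pde} using $g(t)=t^2+O(t^4)$ at $0$, which implies $g^{-1}(s)=\sqrt{s}(1+O(s))$. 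Either way,
\begin{align*}
\lambda_0(\alpha) = \frac{|\partial\Omega_0|}{2S}\,\alpha + O(\alpha^2), \qquad \alpha\to 0^-.
\end{align*}

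Combining the two estimates,
\begin{align*}
\frac{\alpha\,|\partial\Omega_{a_1,a_2,c,S_1}|}{2S} - \lambda_0(\alpha) = \frac{\delta}{2S}\,\alpha + O(\alpha^2),
\end{align*}
and since $\delta>0$ and $\alpha<0$ the leading term is strictly negative. Hence there exists $\alpha_c<0$, depending on $\delta$ and $S$ (and therefore on $a_1,a_2,c,S_1$ and $S$), such that the right-hand side stays negative throughout $[\alpha_c,0)$. Chained with the trial-function upper bound this gives the claimed inequality $\lambda_{a_1,a_2,c,S_1} < \lambda_0$ on $[\alpha_c,0)$. The only real work is quantitative: to exhibit an explicit $\alpha_c$ one must dominate the $O(\alpha^2)$ remainder by some $C(S)\alpha^2$, which reduces to an elementary estimate on $g^{-1}(s)$ near $s=0$; then any $\alpha_c$ with $0<|\alpha_c|<\delta/(2SC(S))$ will do. Neither this nor the isoperimetric input presents any conceptual difficulty.
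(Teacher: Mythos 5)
Your argument is correct, but it takes a genuinely different route from the paper's. The paper tests the \emph{transformed} form $\hat{h}_{\alpha,a_1,a_2,c,S_1}$ with the pulled-back square eigenfunction $\psi_0$, computes $\hat{h}_{\alpha,a_1,a_2,c,S_1}[\psi_0]$ explicitly as $P\,\|\nabla\psi_0\|^2_{L^2(\Omega_0)}+\alpha Q\,\|\psi_0\|^2_{L^2(\partial\Omega_0)}$ with geometric coefficients $P,Q\geq 1$ that equal $1$ only at the square, and then reduces the claim to $g(\alpha)<z(a_1,a_2,c,S_1)$ with $g(\alpha)\to 0$ as $\alpha\to 0^-$ (established by the same Hadamard-derivative/Neumann-limit computation you use). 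You instead test the original Rayleigh quotient (\ref{eq:eig1}) on $\Omega_{a_1,a_2,c,S_1}$ with the constant function, which kills the gradient term and leaves only $\alpha|\partial\Omega_{a_1,a_2,c,S_1}|/(2S)$, and you import the strict quadrilateral isoperimetric inequality to get $\delta>0$; the first-order expansion $\lambda_0(\alpha)=\tfrac{|\partial\Omega_0|}{2S}\alpha+O(\alpha^2)$ then closes the argument. Your version is shorter and avoids all of the explicit form computations, at the cost of invoking the polygonal isoperimetric inequality as an external classical fact (the paper effectively proves a weighted analogue of it from scratch in the chain (\ref{lineq})--(\ref{l3}), and reuses those bounds in Proposition \ref{quadasyp}); if you prefer to stay self-contained, $|\partial\Omega_{a_1,a_2,c,S_1}|>4\sqrt{2S}$ follows from the paper's explicit perimeter formula by the same elementary convexity/AM--GM optimisation used there for $l$. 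The paper's choice of $\psi_0$ as test function is the better trial function away from $\alpha=0$ and yields an explicit admissible threshold $z$ measuring distance from the square, whereas the constant function is only optimal in the Neumann limit; for the purely asymptotic statement being proved, both suffice, and your constants $\delta$ and $C(S)$ give a perfectly explicit $\alpha_c$.
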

\begin{proof}
	First note that
	\begin{align*}
		\lambda_{a_1,a_2,c,S_1}\leq& {\hat{h}_{\alpha,a_1,a_2,c,S_1}[\psi_{0}]}\\
		\implies \lambda_{a_1,a_2,c,S_1}\leq & \lambda_{0}+ {\hat{h}_{\alpha,a_1,a_2,c,S_1}[\psi_{0}]-{h}_{\alpha}[\psi_{0}]},
	\end{align*}
	and so ${\hat{h}_{\alpha,a_1,a_2,c,S_1}[\psi_{0}]-{h}_{\alpha}[\psi_{0}]}<0$ is a sufficient condition for $\lambda_{a_1,a_2,c,S_1}< \lambda_{0}.$
	
	Next observe using the symmetry properties from Corollary \ref{symprop} that 
	\begin{align*}
		&\left|\left|\left(\bchi_{y<0}\frac{a_2}{S_2}-\bchi_{y>0}\frac{a_1}{S_1}\right)c_0\partial_1\psi_0+\frac{cS}{c_0\left(S_1\bchi_{y>0}+S_2\bchi_{y<0}\right)}\partial_2\psi_0\right|\right|^2_{L^2(\Omega_0)}\\
		&=\frac{a_1^2c_0^2}{S_1^2}||\partial_1\psi_{0}||_{L^2(\Omega_{0}^+)}^2-\frac{2a_1cS}{S_1^2}\langle\partial_1\psi_0,\partial_2\psi_0\rangle_{L^2(\Omega_{0}^+)}+\frac{c^2S^2}{c_0^2S_1^2}||\partial_2\psi_{0}||_{L^2(\Omega_{0}^+)}^2\\
		&\phantom{a}+\frac{a_2^2c_0^2}{S_2^2}||\partial_1\psi_{0}||_{L^2(\Omega_{0}^-)}^2+\frac{2a_2cS}{S_2^2}\langle\partial_1\psi_0,\partial_2\psi_0\rangle_{L^2(\Omega_{0}^-)}+\frac{c^2S^2}{c_0^2S_2^2}||\partial_2\psi_{0}||_{L^2(\Omega_{0}^-)}^2\\                                                   &=\left(\frac{a_1^2c_0^2}{4S_1^2}+\frac{a_2^2c_0^2}{4S_2^2}+\frac{c^2S^2}{4c_0^2S_1^2}+\frac{c^2S^2}{4c_0^2S_2^2}\right)||\nabla\psi_{0}||_{L^2(\Omega_{0})}^2,
	\end{align*}
	and so we may write equation (\ref{explexpl}) in the form
	\begin{align*}
		\hat{h}_{\alpha,a_1,a_2,c,S_1}[\psi_0]=&
		\left(\frac{a_1^2c_0^2}{4S_1^2}+\frac{a_2^2c_0^2}{4S_2^2}+\frac{c^2S^2}{4c_0^2S_1^2}+\frac{c^2S^2}{4c_0^2S_2^2}+\frac{c_0^2}{2c^2}\right)||\nabla\psi_{0}||_{L^2(\Omega_{0})}^2\\
		+&\frac{\alpha S}{4\sqrt{2}c_0}l(a_1,a_2,c,S_1)||\psi_{0}||^2_{L^2(\partial\Omega_{0})},
	\end{align*}
	where
	\begin{align*}
		&l(a_1,a_2,c,S_1)=\sum_{i,j}\frac{|\Gamma_{a_j,c,S_1}^{(i,j)}|}{S_j}\\=&\frac{\sqrt{\frac{S_1^2}{c^2}+(a_1+c)^2}+\sqrt{\frac{S_1^2}{c^2}+(a_1-c)^2}}{S_1}
		+\frac{\sqrt{\frac{S_2^2}{c^2}+(a_2+c)^2}+\sqrt{\frac{S_2^2}{c^2}+(a_2-c)^2}}{S_2}.
	\end{align*}
	Via applying straightforward  optimisation arguments in each variable one finds that
	\begin{align}
		l(a_1,a_2,c,S_1)\geq& 2\frac{\sqrt{\frac{S_1^2}{c^2}+c^2}}{S_1}+2\frac{\sqrt{\frac{S_2^2}{c^2}+c^2}}{S_2}\label{lineq}\\
		\geq& \frac{2\sqrt{2}}{\sqrt S_1}+\frac{2\sqrt{2}}{\sqrt S_2}\label{l2}\\
		\geq& \frac{4\sqrt{2S}}{S},\label{l3}
	\end{align}
	where we have equality only in the case $a_1=a_2=0,$ $c=c_0,$ $S_1=S.$\\                                        
	Since we have 
	\begin{align*}
		\frac{S^2}{S_1^2}+\frac{S^2}{S_2^2}\geq2,
	\end{align*}                                 
	we also have
	\\                
	\begin{align}\label{auxb}
		\frac{c^2S^2}{4c_0^2S_1^2}+\frac{c^2S^2}{4c_0^2S_2^2}+\frac{c_0^2}{2c^2}  	\geq\frac{c^2}{2c_0^2}+\frac{c_0^2}{2c^2}
		=\frac{(c^2-c_0^2)^2+2c^2c_0^2}{2c^2c_0^2}
		\geq1.
	\end{align}                                          
	Let 
	\begin{align*}
		z(a_1,a_2,c,S_1)=&\frac{\frac{Sl(a_1,a_2,c,S_1)}{4\sqrt{2}c_0}-1}{\frac{a_1^2c_0^2}{4S_1^2}+\frac{a_2^2c_0^2}{4S_2^2}+\frac{c^2S^2}{4c_0^2S_1^2}+\frac{c^2S^2}{4c_0^2S_2^2}+\frac{c_0^2}{2c^2}-1},\\
		g(\alpha)=&-\frac{||\nabla\psi_{0}||_{L^2(\Omega_{0})}^2}{\alpha||\psi_{0}||^2_{L^2(\partial\Omega_{0})}},
	\end{align*}
	observing from (\ref{l3}) and (\ref{auxb}) that $z(a_1,a_2,c,S_1)\geq 0$ with equality if and only if $(a_1,a_2,c,S_1)=(0,0,c_0,S).$\\
	Now, note that
	\begin{align*}
		\hat{h}_{\alpha,a_1,a_2,c,S_1}[\psi_{0}]-\hat{h}_\alpha[\psi_0]<& 0\\
		\iff 0> & 
		\left(\frac{a_1^2c_0^2}{4S_1^2}+\frac{a_2^2c_0^2}{4S_2^2}+\frac{c^2S^2}{4c_0^2S_1^2}+\frac{c^2S^2}{4c_0^2S_2^2}+\frac{c_0^2}{2c^2}-1\right)||\nabla\psi_{0}||_{L^2(\Omega_{0})}^2\\
		&+\alpha\left(\frac{Sl(a_1,a_2,c,S_1)}{4\sqrt{2}c_0}-1\right)||\psi_{0}||^2_{L^2(\partial\Omega_{0})}\\
		\iff 0< &\frac{||\nabla\psi_{0}||_{L^2(\Omega_{0})}^2}{\alpha||\psi_{0}||^2_{L^2(\partial\Omega_{0})}}+\frac{\frac{Sl(a_1,a_2,c,S_1)}{4\sqrt{2}c_0}-1}{\frac{a_1^2c_0^2}{4S_1^2}+\frac{a_2^2c_0^2}{4S_2^2}+\frac{c^2S^2}{4c_0^2S_1^2}+\frac{c^2S^2}{4c_0^2S_2^2}+\frac{c_0^2}{2c^2}-1}\\
		\iff g(\alpha&)< z(a_1,a_2,c,S_1).
	\end{align*}
	Recall that 
	\begin{align*}
		\lambda_{0}=||\nabla\psi_0||^2_{L^2(\Omega_0)}+\alpha||\psi_0||^2_{L^2(\partial\Omega_0)},
	\end{align*}
	with $\lambda_0$ converging to the first Neumann eigenvalue in the limit $\alpha\to 0.$ Further,
	\begin{align*}
		\frac{d\lambda_{0}}{d\alpha}=||\psi_0(\alpha)||^2_{L^2(\partial\Omega_0)},
	\end{align*}
	(see for instance \cite[p.~86-89]{henrot2017shape}.)\\                                                                                                                  Recall, for the Neumann problem that the first eigenfunction is the constant $\psi_0=|\Omega_0|^{-1/2}.$\\
	At $\alpha=0$ we have
	\begin{align*}
		\left.\frac{d\lambda_{0}}{d\alpha}\right|_{\alpha=0}=||\psi_0(0)||^2_{L^2(\partial\Omega_0)}=\frac{|\partial\Omega_0|}{|\Omega_0|}.
	\end{align*}
	Further, by the definition of derivative, we have
	\begin{align}
		\frac{|\partial\Omega_0|}{|\Omega_0|}=\left.\frac{d\lambda_{0}}{d\alpha}\right|_{\alpha=0}=\lim\limits_{\alpha\to 0}\frac{\lambda_{0}(\alpha)-\lambda_0(0)}{\alpha}=\lim\limits_{\alpha\to 0}\frac{||\nabla\psi_0||^2_{L^2(\Omega_0)}+\alpha||\psi_0||^2_{L^2(\partial\Omega_0)}}{\alpha}.\label{eq:eigarg}
	\end{align}
	Now,\\
	\begin{align*}
		\lim\limits_{\alpha\to 0}g(\alpha)=&-\lim\limits_{\alpha\to 0}\frac{||\nabla\psi_0||^2_{L^2(\Omega_0)}}{\alpha||\psi_0||^2_{L^2(\partial\Omega_0)}}\\
		=&1-\lim\limits_{\alpha\to 0}\frac{||\nabla\psi_0||^2_{L^2(\Omega_0)}+\alpha||\psi_0||^2_{L^2(\partial\Omega_0)}}{\alpha||\psi_0||^2_{L^2(\partial\Omega_0)}}\\=&1-\lim\limits_{\alpha\to 0}\left(\frac{1}{||\psi_0||^2_{L^2(\partial\Omega_0)}}\right)\lim\limits_{\alpha\to 0}\left(\frac{||\nabla\psi_0||^2_{L^2(\Omega_0)}+\alpha||\psi_0||^2_{L^2(\partial\Omega_0)}}{\alpha}\right),\\
		=&1-\frac{|\Omega_0|}{|\partial\Omega_0|}\frac{|\partial\Omega_0|}{|\Omega_0|},\\
		=&0,
	\end{align*}
	where we have used (\ref{eq:eigarg}).\\                                                                                                                                                                                                                                                                      Now, since $g(\alpha)> 0$ for $\alpha<0$ and $g(\alpha)\to0$ as $\alpha\to 0,$ it follows that since $z(a_1,a_2,c,S_1)>0$ there exists $\alpha_c$ such that $0>\alpha>\alpha_c$ implies $0<g(\alpha)<z(a_1,a_2,c,S_1).$ 
\end{proof}
The asymptotic behaviour of the first Robin eigenvalue  with large negative $\alpha$ is known for piecewise smooth domains, see for instance the work of 
Levitin and Parnovski \cite{levitin2008principal}. In particular we will make use of the following result which follows from Theorem 3.6 in \cite{levitin2008principal},
\begin{lem}
	Let $\Omega_{a_1,a_2,c,S_1}$ be a quadrilateral with inner angles $\theta_1,\theta_2,\theta_3,\theta_4$ respectively. Let \begin{align*}
		C_{i}=\begin{cases}
			1, \quad &\text{if }\quad\theta_i\geq\pi;\\
			\csc^2\left(\theta_i/2\right),\quad &\text{if }\quad\theta_i\leq\pi.
		\end{cases}
	\end{align*}
	Then,
	\begin{align}\label{eqn:draft}
		\lambda_{a_1,a_2,c,S_1}=-\alpha^2\max\limits_{i\in\lbrace1,2,3,4\rbrace}C_{i}+o(\alpha^2),\quad \alpha\to-\infty.
	\end{align}
\end{lem}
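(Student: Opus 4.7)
The plan is to invoke Theorem 3.6 of Levitin--Parnovski \cite{levitin2008principal} essentially verbatim, and then verify that the abstract formula appearing there specialises to the expression (\ref{eqn:draft}) for a quadrilateral.

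First I would check the hypotheses of the Levitin--Parnovski result. A quadrilateral $\Omega_{a_1,a_2,c,S_1}$ is a piecewise $C^\infty$ Lipschitz domain in $\mathbb{R}^2$; its boundary decomposes into four straight segments $\Gamma^{(i,j)}_{a_j,c,S_1}$ meeting at the four vertices with interior angles $\theta_1,\theta_2,\theta_3,\theta_4$. This is precisely the class of planar domains with corner singularities covered by Theorem 3.6 of \cite{levitin2008principal}, so the theorem applies directly.

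Next I would apply the theorem. Levitin--Parnovski show that as $\alpha\to-\infty$ the leading term in the asymptotic expansion of the principal eigenvalue is $-\alpha^2$ multiplied by the largest corner constant, the corner constants being $\csc^2(\theta_i/2)$ at a convex corner ($\theta_i<\pi$) and $1$ along any smooth piece of boundary or at a non-convex corner ($\theta_i\geq\pi$), where the latter reflects the fact that there is no concentration stronger than one would see for a half-plane. This yields
\begin{align*}
\lambda_{a_1,a_2,c,S_1}=-\alpha^2\max_{i\in\{1,2,3,4\}}C_i+o(\alpha^2),\quad \alpha\to -\infty,
\end{align*}
with $C_i$ as in the statement. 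The boundary condition is that of Laplacian with Robin parameter $\alpha$, which matches the setting of \cite{levitin2008principal} after a possible sign convention adjustment (our $\alpha$ is the same as theirs up to notation).

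The only subtlety, and the step where one must be a little careful, is to make sure the convention in \cite{levitin2008principal} for the sign of $\alpha$ and for the measurement of the angle agrees with ours, and to confirm that the constant for non-convex corners ($\theta_i\geq\pi$) is indeed $1$ rather than some other expression arising from the corner analysis. Once these conventions are checked, the statement follows with no further computation, since everything else---for instance the higher-order remainder $o(\alpha^2)$ and the fact that the maximum is attained at a single dominant corner generically---is supplied by the cited theorem.
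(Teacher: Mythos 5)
Your proposal is correct and matches the paper exactly: the paper gives no independent proof of this lemma, stating only that it ``follows from Theorem 3.6 in \cite{levitin2008principal},'' which is precisely the citation-and-specialisation argument you outline. Your additional remarks about checking sign conventions and the value of the constant at non-convex corners are sensible diligence but do not constitute a different route.
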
 
\begin{cry}\label{sectcry2}
	Let $\Omega_{a_1,a_2,c,S_1}$ be a quadrilateral other than a rectangle. Then, there exists $\alpha_c<0$ such that for all $\alpha<\alpha_c,$ 
	\begin{align*}
		\lambda_{a_1,a_2,c,S_1}<\lambda_{0}.
	\end{align*} 
\end{cry}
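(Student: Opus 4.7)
The plan is to compare the large-$|\alpha|$ asymptotic behaviour of $\lambda_{a_1,a_2,c,S_1}$ and $\lambda_0$ using the preceding lemma. Writing the asymptotic for both domains and taking the difference, the corollary will reduce to showing that the leading coefficient $\max_i C_i$ for a non-rectangular quadrilateral strictly exceeds the corresponding coefficient for the square $\Omega_0$.

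First I would evaluate $\max_i C_i$ for $\Omega_0$. Since all four interior angles of the square equal $\pi/2 < \pi$, each $C_i = \csc^2(\pi/4) = 2$, so the preceding lemma yields $\lambda_0 = -2\alpha^2 + o(\alpha^2)$ as $\alpha\to -\infty$.

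The main step is to establish that any non-rectangular quadrilateral $\Omega_{a_1,a_2,c,S_1}$ satisfies $M := \max_i C_i > 2$. The key observation is that on $(0,\pi)$ the map $\theta \mapsto \csc^2(\theta/2)$ is strictly decreasing, equals $2$ precisely at $\theta = \pi/2$, and decreases to $1$ at $\theta = \pi$. I would then argue by cases on the interior angles $\theta_1,\dots,\theta_4$, which sum to $2\pi$: (i) if all $\theta_i \in [\pi/2,\pi]$, then $2\pi = \sum_i \theta_i \geq 4(\pi/2) = 2\pi$ forces each $\theta_i = \pi/2$, making $\Omega_{a_1,a_2,c,S_1}$ a rectangle, contrary to hypothesis; (ii) if some $\theta_i > \pi$ (a reflex angle, so a non-convex quadrilateral), then the remaining three angles sum to less than $\pi$, so at least one is strictly less than $\pi/3 < \pi/2$, giving the corresponding $C_i > 2$; (iii) otherwise some $\theta_i \in (0, \pi/2)$, and $C_i > 2$ directly.

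Combining these ingredients, with $M > 2$ the asymptotic formula gives
\begin{align*}
\lambda_{a_1,a_2,c,S_1} - \lambda_0 = -(M - 2)\alpha^2 + o(\alpha^2),
\end{align*}
which tends to $-\infty$ as $\alpha \to -\infty$. Choosing $\alpha_c < 0$ sufficiently negative then produces the required inequality $\lambda_{a_1,a_2,c,S_1} < \lambda_0$ for all $\alpha < \alpha_c$. The only delicate step is the geometric case analysis ensuring $\max_i C_i > 2$; once that is in hand, the remainder is a routine comparison of leading asymptotics.
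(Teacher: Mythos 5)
Your proposal is correct and follows essentially the same route as the paper: both compare the leading $-\alpha^2\max_i C_i$ coefficient against the square's value of $2$ and reduce the corollary to showing a non-rectangular quadrilateral has an interior angle strictly less than $\pi/2$. The only difference is that you spell out the case analysis (including the reflex-angle case) justifying that geometric fact, which the paper simply asserts.
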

\begin{proof}
	Observe from Lemma (\ref{eqn:draft}) above that
	\begin{align*}
		\lambda_{0}=-2\alpha^2+o(\alpha^2),\quad \alpha\to-\infty,
	\end{align*}
	and thus it is clear that the corollary follows if 
	\begin{align}
		-\alpha^2\max\limits_{i\in\lbrace1,2,3,4\rbrace}C_{i}<-2\alpha^2,\label{eq:ineq}
	\end{align}
	with $\theta_i$ and $q_i$ given as in the previous lemma. However, since $\Omega_{a_1,a_2,c,S_1}$ is not a rectangle, at least one of its inner angles, $\theta_1$ say, satisfies $\theta_1<\pi/2,$ and so $\max\limits_{i\in\lbrace1,2,3,4\rbrace}C_{q_i}>\csc^2(\theta_1/2)>2$ showing (\ref{eq:ineq}).
\end{proof}
\begin{remark}
	Note that although the above Corollary does not hold for rectangles, it is already known via a separation of variables argument that the square is the maximiser of the first eigenvalue among rectangular domains, see for instance \cite{laugesen2019robin}.
\end{remark} 
\begin{proof}[Proof of Theorem 2]
	Proposition \ref{smalph}, Corollary \ref{sectcry2} and the above Remark together prove Theorem \ref{thm2}.\qed
\end{proof}  
We can also use test function arguments to obtain asymptotic results for extreme values of the quadrilateral parameters.
\begin{prop}\label{quadasyp}
	Let $\alpha<0,$ then there exist positive constants $A,c_1,c_2,\tilde{S}$ depending only upon $\alpha$ such that any of the following restrictions\begin{enumerate}[label=(\Roman*)]
		\item $|a_1|>A, \ a_2\in\mathbb{R}, \ \ c>0 \text{ and } 0<S_1<2S;$
		\item $|a_2|>A, \ a_1\in\mathbb{R}, \ \ c>0 \text{ and } 0<S_1<2S;$
		\item $a_1\in\mathbb{R},\ a_2\in\mathbb{R}, \ \ 0<S_1<2S \text{ and } c>c_1;$
		\item $a_1\in\mathbb{R},\ a_2\in\mathbb{R}, \ \ 0<S_1<2S \text{ and }  c<c_2;$
		\item $a_1\in\mathbb{R},\ a_2\in\mathbb{R}, \ \ S_1<\tilde{S} \text{ and }  c>0;$     
		\item $a_1\in\mathbb{R},\ a_2\in\mathbb{R}, \ \ |2S-S_1|<\tilde{S} \text{ and }  c>0;$                                                                                                                                                                                                                                     \end{enumerate}
	imply that $\lambda_{a_1,a_2,c,S_1}<\lambda_{0}.$
\end{prop}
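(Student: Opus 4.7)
My approach would be to exploit the same variational framework as in the proof of Proposition~\ref{smalph}, but to replace the test function $\psi_0$ by the much simpler constant function $\psi\equiv 1 \in H^1(\Omega_0)$. Plugging $\psi\equiv 1$ into (\ref{explexpl}) eliminates all gradient contributions, and using $\|1\|^2_{L^2(\Gamma_0^{(i,j)})}=|\Gamma_0|=\sqrt{2S}$ on each of the four edges together with $\|1\|^2_{L^2(\Omega_0)}=2S$, a short calculation yields
\begin{align*}
	\lambda_{a_1,a_2,c,S_1}\;\leq\;\frac{\hat h_{\alpha,a_1,a_2,c,S_1}[1]}{\|1\|^2_{L^2(\Omega_0)}}\;=\;\frac{\alpha}{2}\,l(a_1,a_2,c,S_1),
\end{align*}
where $l$ is exactly the geometric quantity introduced in the proof of Proposition~\ref{smalph}. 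Since $\alpha<0$ and $\lambda_0<0$, the conclusion $\lambda_{a_1,a_2,c,S_1}<\lambda_0$ is implied by the single sufficient condition
\begin{align*}
	l(a_1,a_2,c,S_1)\;>\;\frac{2\lambda_0}{\alpha},
\end{align*}
the right hand side being a fixed positive number depending only on $\alpha$ and the area parameter~$S$.

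The rest of the proof consists in exhibiting, in each of the six regimes, a lower bound on $l$ that can be forced to exceed this threshold by a suitable choice of $A$, $c_1$, $c_2$, or $\tilde S$. Three elementary inequalities do all the work. First, dropping the $S_j/c$ contribution in each $|\Gamma_{a_j,c,S_j}^{(i,j)}|\geq |a_j+(-1)^{i+1}c|$ and using the identity $|a_j+c|+|a_j-c|=2\max(|a_j|,c)$ together with $S_j\leq 2S$ gives $l\geq |a_j|/S$, handling (I) and (II) as soon as $A$ exceeds $2S|\lambda_0|/|\alpha|$; combining the same bound with the AM--HM inequality $1/S_1+1/S_2\geq 2/S$ refines this to $l\geq 4c/S$, handling (III) with a suitable $c_1$. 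Second, keeping instead the $S_j/c$ term yields $|\Gamma_{a_j,c,S_j}^{(i,j)}|\geq S_j/c$ and hence $l\geq 4/c$, which handles (IV) for $c_2$ a small positive multiple of $|\alpha|/|\lambda_0|$. Third, the inequality (\ref{l2}) already established in the proof of Proposition~\ref{smalph}, namely $l\geq 2\sqrt{2}/\sqrt{S_1}+2\sqrt{2}/\sqrt{S_2}$, diverges as either $S_j\to 0$, handling (V) and (VI) by choosing $\tilde S$ as a small positive multiple of $\alpha^2/\lambda_0^2$.

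There is no substantial obstacle: the whole argument rests on the single test function $\psi\equiv 1$ and on the bounds for $l$ which were effectively collected already in Proposition~\ref{smalph}. The only mild care required is in the bookkeeping of writing the four positive thresholds $A$, $c_1$, $c_2$, $\tilde S$ as explicit functions of $\alpha$, $\lambda_0(\alpha)$, and $S$ that secure the strict inequality $l>2\lambda_0/\alpha$ in the relevant regime.
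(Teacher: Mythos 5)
Your proposal is correct and follows essentially the same route as the paper: the constant test function $\mathds{1}$ gives $\lambda_{a_1,a_2,c,S_1}\leq\frac{\alpha}{2}l(a_1,a_2,c,S_1)$, reducing everything to the sufficient condition $l>2\lambda_{0}/\alpha$, after which one forces $l$ to diverge in each of the six regimes. The elementary lower bounds you use for $l$ in cases (I)--(IV) differ only cosmetically from the chain of inequalities the paper recycles from Proposition \ref{smalph}, and cases (V)--(VI) are handled identically via (\ref{l2}).
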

\begin{proof}
	Consider the trial function $ \mathds{1} \in L^2(\Omega_{0}).$\\
	Then,
	\begin{align*}
		\hat{h}_{\alpha,a_1,a_2,c,S_1}[\mathds{1}]=&\alpha\sum_{i,j}\frac{S|\Gamma_{a_j,c,S_1}^{(i,j)}|}{S_j|\Gamma_{0}^{(i)}|}||\mathds{1}||^2_{L^2(\Gamma^{(i)}_{0})}\\=&\alpha\sum_{i,j}\frac{S|\Gamma_{a_j,c,S_1}^{(i,j)}|}{S_j},
	\end{align*}
	and hence,
	\begin{align*}
		\lambda_{a_1,a_2,c,S_1}\leq\frac{\hat{h}_{\alpha,a_1,a_2,c,S_1}[\mathds{1}]}{||\mathds{1}||^2_{L^2(\Omega_0)}}=\frac{\alpha}{2}\sum_{i,j}\frac{|\Gamma_{a_j,c,S_1}^{(i,j)}|}{S_j}.
	\end{align*}
	As in the proof of Proposition (\ref{smalph})                                            we set $l(a_1,a_2,c,S_1)=\sum_{i,j}\frac{|\Gamma_{a_j,c,S_1}^{(i,j)}|}{S_j},$ and then observe that we have
	\begin{align*}
		\lambda_{a_1,a_2,c,S_1}<\lambda_{0},
	\end{align*}
	provided that the sufficent condition
	\begin{align*}
		l(a_1,a_2,c,S_1)>\frac{2\lambda_{0}}{\alpha}
	\end{align*}
	is satisfied.\\
	Observe that for $j=1,2,$
	\begin{align*}
		l(a_1,a_2,c,S_1)\geq& \frac{\sqrt{\frac{S_j^2}{c^2}+(a_j+c)^2}+\sqrt{\frac{S_j^2}{c^2}+(a_j-c)^2}}{S_j}\\
		\geq&\frac{\sqrt{\frac{S_j^2}{c^2}+(|a_j|+c)^2}}{S_j}\\
		\geq&\frac{|a_j|}{2S}\to\infty \text{ as }|a_j|\to\infty.
	\end{align*}                                                       This proves (I) and (II).\\                                                                                                                                                                                                                                                                                                                                                                                           Recall from equation (\ref{lineq}) in the proof of Proposition \ref{smalph} that
	\begin{align*}
		l(a_1,a_2,c,S_1)\geq2\frac{\sqrt{\frac{S_1^2}{c^2}+c^2}}{S_1}+2\frac{\sqrt{\frac{S_2^2}{c^2}+c^2}}{S_2},
	\end{align*}
	and so
	\begin{align*}
		l(a_1,a_2,c,S_1)\geq&2{\sqrt{\frac{1}{c^2}+\frac{c^2}{S_1^2}}}\\
		\geq&2{\sqrt{\frac{1}{c^2}+\frac{c^2}{4S^2}}}.
	\end{align*}
	Observing that $\lim\limits_{c\to\infty} {\sqrt{\frac{1}{c^2}+\frac{c^2}{4S^2}}}=\infty$ and          $\lim\limits_{c\to 0} {\sqrt{\frac{1}{c^2}+\frac{c^2}{4S^2}}}=\infty$ proves (III) and (IV).                                                                                                                                                                                          \\
	Next, recall from equation (\ref{l2}) in the proof of Proposition \ref{smalph} that   \begin{align*}
		l(a_1,a_2,c,S_1)\geq \frac{2\sqrt{2}}{\sqrt{S_1}}+\frac{2\sqrt{2}}{\sqrt{S_2}},
	\end{align*}   
	where $\lim\limits_{S_1\to 0}   \frac{2\sqrt{2}}{\sqrt{S_1}}+\frac{2\sqrt{2}}{\sqrt{S_2}}=\infty$ and    $\lim\limits_{S_1\to 2S}   \frac{2\sqrt{2}}{\sqrt{S_1}}+\frac{2\sqrt{2}}{\sqrt{S_2}}=\infty$  which proves (V) and (VI).      
\end{proof}  
\begin{proof}[Proof of Theorem 3]
Given $A,c_1,c_2,\tilde{S}$ from Proposition \ref{quadasyp}, we can find $C$ such that $d_H(\Omega,\Omega_0)>C$ implies that the conditions ($I$)-($VI$) hold.
\end{proof}                     
\section{Appendix}
This appendix consists primarily of messy computational details omitted in the preceding sections. In particular, we provide the details needed in the computation of the first and second derivatives of the first eigenvalue with respect to the geometric parameters.\\
\begin{lem}\label{hd1v2}
	Explicitly, we have
	\item
\begin{enumerate}[label=(\Roman*)]
	\item for $j=1,2,$
	\begin{align*}
		\hat{h}_{\alpha,a_1,a_2,c,S_1}^{d_{a_j}}[f,\phi]=&\left\langle(-1)^j\frac{c_0}{S}\partial_1f,(-1)^j\frac{a_jc_0}{S}\partial_1\phi+\frac{cS}{c_0S_j}\partial_2\phi\right\rangle_{L^2(\Omega_{0}\cap\lbrace (-1)^jy<0\rbrace)}\\
		+&\left\langle(-1)^j\frac{c_0}{S}\partial_1\phi,(-1)^j\frac{a_jc_0}{S}\partial_1f+\frac{cS}{c_0S_j}\partial_2f\right\rangle_{L^2(\Omega_{0}\cap\lbrace(-1)^jy<0\rbrace)}\\
		+&\frac{\alpha S}{S_j|\Gamma_0|}\sum_{i=1}^{2}\frac{a_j+(-1)^{i+1}c}{|\Gamma^{(i,j)}_{a_j,c,S_1}|}\langle f,\phi\rangle_{L^2(\Gamma_{0}^{(i,j)})},
	\end{align*}
	\item 
	\begin{align*}
		\hat{h}_{\alpha,a_1,a_2,c,S_1}^{d_{c}}[f,\phi]=&-2\frac{c_0^2}{c^3}\langle\partial_1f,\partial_1\phi\rangle_{L^2(\Omega_0)}\\
		+&\sum_{j=1}^{2}\left\langle\frac{S}{S_jc_0}\partial_2f,(-1)^{j+1}a_j\frac{c_0}{S_j}\partial_1\phi+\frac{cS}{c_0S_j}\partial_2\phi\right\rangle_{L^2(\Omega_0\cap{\lbrace (-1)^jy<0\rbrace})}\\
		+&\sum_{j=1}^{2}\left\langle\frac{S}{S_jc_0}\partial_2\phi,(-1)^{j+1}a_j\frac{c_0}{S_j}\partial_1f+\frac{cS}{c_0S_j}\partial_2f\right\rangle_{L^2(\Omega_0\cap{\lbrace (-1)^jy<0\rbrace})}\\
		+&\sum_{i,j}\alpha\frac{S\left(-\frac{S^2}{c^3}+(-1)^{i+1}a_j+c\right)}{S_j|\Gamma_{0}||\Gamma^{(i,j)}_{a_j,c,S_1}|}\langle f,\phi\rangle_{L^2(\Gamma_{0}^{(1,1)})},
	\end{align*}
	\item \begin{align*}
		&\hat{h}_{\alpha,a_1,a_2,c,S_1}^{d_{S_1}}[f,\phi]=\frac{\alpha S}{|\Gamma_{0}|}\sum_{i,j}(-1)^{j+1}\left(\frac{1}{|\Gamma^{(i,j)}_{a_j,c,S_1}|c^2}-\frac{|\Gamma^{(i,j)}_{a_j,c,S_1}|}{S_j^2}\right)\langle f,\phi\rangle_{L^2(\Gamma_{0}^{(i,j)})}\\
		+&\sum_{j=1}^2\left\langle\frac{a_jc_0}{S_j^2}\partial_1f+(-1)^j\frac{cS}{S_j^2c_0}\partial_2f,(-1)^j\frac{a_jc_0}{S_j}\partial_1\phi+\frac{cS}{c_0S_j}\partial_2\phi\right\rangle_{L^2(\Omega_{0}\cap \lbrace (-1)^jy<0\rbrace)},\\
		+&\sum_{j=1}^{2}\left\langle\frac{a_jc_0}{S_j^2}\partial_1\phi+(-1)^j\frac{cS}{S_j^2c_0}\partial_2\phi,(-1)^j\frac{a_jc_0}{S_j}\partial_1f+\frac{cS}{c_0S_j}\partial_2f\right\rangle_{L^2(\Omega_{0}\cap \lbrace (-1)^jy<0\rbrace)}.
	\end{align*}
	\end{enumerate}                                                                                                                                                                                                                                     \end{lem}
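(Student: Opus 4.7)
The plan is to compute the derivatives directly from the definition (\ref{hdif}). Writing $F := \mathscr{L}^{-1}_{a_1,a_2,c,S_1}$ and reading off the explicit piecewise formulas displayed just before Lemma \ref{linmap}, the vectors $\partial_i F$ are piecewise constant on $\Omega_0^\pm$. Explicitly, on $\Omega_0^+$ one has $\partial_1 F = (c_0/c,\ 0)$ and $\partial_2 F = (-a_1 c_0/S_1,\ cS/(c_0 S_1))$, while on $\Omega_0^-$ one has $\partial_1 F = (c_0/c,\ 0)$ and $\partial_2 F = (a_2 c_0/S_2,\ cS/(c_0 S_2))$. Since these are constant on each half, the parameter derivatives $\partial_v(\partial_i F)$ are again piecewise constant vectors, and every bulk integral in $\bar h^{d_v}[f,\phi]$ collapses to a linear combination of inner products of $\partial_1 f, \partial_2 f$ against $\partial_1 \phi, \partial_2 \phi$ over the relevant half of $\Omega_0$, with coefficients built from these constants.

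Next, I would treat the three cases in turn. For $v = a_j$ only $\partial_2 F$ on the half indexed by $j$ depends on $a_j$, giving the single new vector $((-1)^j c_0/S_j,\ 0)$ on that half and hence a single bulk contribution. For $v = c$, both $\partial_1 F$ and $\partial_2 F$ depend on $c$ on both halves, producing two bulk terms and a sum over all four edges. For $v = S_1$, I would use the constraint $S_2 = 2S - S_1$ so that $\partial_{S_1}$ acts directly on the upper half but passes through $S_2$ with a minus sign on the lower half. Alongside each bulk computation, the boundary piece $\tilde h^{d_v}$ is handled by differentiating $|\Gamma_{a_j,c,S_1}^{(i,j)}|/S_j$ by the chain rule, using the explicit length $|\Gamma_{a_j,c,S_1}^{(i,j)}| = \sqrt{S_j^2/c^2 + (a_j + (-1)^{i+1}c)^2}$; multiplying by $\alpha$ and adding to $\bar h^{d_v}$ yields $\hat h^{d_v}$ in the form of (I), (II), (III).

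The main obstacle is not conceptual but notational: correctly tracking the sign $(-1)^j$ that distinguishes upper from lower half, the sign $(-1)^{i+1}$ that distinguishes the two vertices along each edge, and the implicit dependence of $S_2$ on $S_1$ in case (III). Once a uniform convention is fixed and the computation is carried out half by half and edge by edge, each of (I), (II), (III) reduces to collecting like terms into the displayed forms.
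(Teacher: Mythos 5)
Your proposal is correct and matches the paper's proof, which is given only as ``via direct computation using the formulae (\ref{hdif})''; you correctly identify the piecewise-constant vectors $\partial_i\mathscr{L}^{-1}_{a_1,a_2,c,S_1}$ on $\Omega_0^\pm$, the chain rule through $S_2=2S-S_1$ for case (III), and the differentiation of the edge lengths for the boundary terms. Nothing further is needed.
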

\begin{proof}
	Via direct computation using the formulae (\ref{hdif}).
\end{proof} 
The following lemma contains the computations of the second order terms that are required for the computation of  ${\lambda}^{v_1,v_2}_{a_1,a_2,c,S_1}$ via (\ref{lamdiv2}).         \begin{lem}      \label{2nddif}                                                                                                        
	Explicitly we have \begin{enumerate}[label=(\Roman*)]
			\item for $j=1,2,$
			\begin{align*}
				\hat{h}_{\alpha,a_1,a_2,c,S_1}^{d_{a_j},d_{a_j}}[f]=&2\frac{c_0^2}{S_1^2}||\partial_1f||^2_{L^2(\Omega_0\cap\lbrace (-1)^jy<0\rbrace)}\\
				&+\frac{\alpha S}{S_j|\Gamma_0|}\sum_{i=1}^{2}\left(\frac{1}{|\Gamma^{(i,j)}_{a_j,c,S_1}|}-\frac{(a_1+(-1)^{i+1}c)^2}{|\Gamma^{(i,j)}_{a_j,c,S_1}|^3}\right)||f||^2_{L^2(\Gamma_0^{(i,j)})},
			\end{align*}        
			\item 
			\begin{align*}
				\hat{h}_{\alpha,a_1,a_2,c}^{d_{c},d_{c}}[f]=&6\frac{c_0^2}{c^4}||\partial_1f||^2_{L^2(\Omega_{0})}+\frac{2}{c_0^2}\sum_{j=1}^2\frac{S^2}{S_j^2}\left|\left|\partial_2f\right|\right|^2_{L^2(\Omega_{0}\cap\lbrace(-1)^jy<0\rbrace)}\\
				+&\frac{\alpha S}{|\Gamma_{0}|}\sum_{i,j}\frac{1}{S_j}\left(\frac{3\frac{S_j^2}{c^4}+1}{|\Gamma_{a_j,c,S_1}^{(i,j)}|}-\frac{\left(-\frac{S_j^2}{c^3}+(-1)^{i+1}a_j+c\right)^2}{|\Gamma_{a_j,c,S_1}^{(i,j)}|^3}\right)||f||^2_{L^2(\Gamma_{0}^{(1,1)})},
			\end{align*}       
			\item
			\begin{align*}
				&\hat{h}_{\alpha,a_1,a_2,c}^{d_{S_1},d_{S_1}}[f]=2\sum_{j=1}^{2}\left|\left|\frac{a_jc_0}{S_j^2}\partial_1f+(-1)^j\frac{cS}{c_0S_j^2}\partial_2f\right|\right|^2_{L^2(\Omega_{0}\cap\lbrace (-1)^jy<0\rbrace)}\\+&2\sum_{j=1}^{2}\left\langle(-1)^j\frac{a_j c_0}{S_j}\partial_1f+\frac{cS}{c_0S_j}\partial_2f,(-1)^j\frac{2a_jc_0}{S_j^3}\partial_1f+\frac{2cS}{c_0S_j^3}\partial_2f\right\rangle_{L^2(\Omega_{0}\cap\lbrace (-1)^jy<0\rbrace)}\\
				&+\frac{\alpha S}{|\Gamma_{0}|}\sum_{i,j}\left(-\frac{S_j}{|\Gamma^{(i,j)}_{a_j,c,S_1}|^3c^4}-\frac{1}{c^2|\Gamma^{(i,j)}_{a_j,c,S_1}|S_j}+\frac{2|\Gamma^{(i,j)}_{a_1,c,S_1}|}{S_j^3}\right)||f||^2_{L^2(\Gamma_{0}^{(i,j)})},
			\end{align*}  
			\item for $j=1,2,$ \begin{align*}
				&\hat{h}_{\alpha,a_1,a_2,c,S_1}^{d_{a_j},d_{c}}[f]=(-1)^j\frac{2S}{S_j^2}\langle\partial_1f,\partial_2f\rangle_{L^2(\Omega_0\cap\lbrace (-1)^jy<0\rbrace)}\\
				+&\frac{\alpha S}{|\Gamma_0|S_j}\sum_{i=1}^{2}\left(\frac{(-1)^{i+1}}{|\Gamma^{(i,j)}_{a_j,c,S_1}|}-\frac{(a_j+(-1)^{i+1}c)(-\frac{S_j^2}{c^3}+(-1)^{i+1}a_j+c)}{|\Gamma^{(i,j)}_{a_j,c,S_1}|^3}\right)||f||_{\Gamma_{0}^{(i,j)}}^2,
			\end{align*}
			\item \begin{align*}
				\hat{h}_{\alpha,a_1,a_2,c}^{d_{a_1},d_{a_2}}[f]=0,                                             \end{align*} 
			\item for $j=1,2,$
			\begin{align*}
				&\hat{h}_{\alpha,a_1,a_2,c,S_1}^{d_{a_j},d_{S_1}}[f]
				=4\frac{c_0}{S_j^3}\left\langle\partial_1f,(-1)^j{a_jc_0}\partial_1f+\frac{cS}{c_0}\partial_2f\right\rangle_{L^2(\Omega_{0}\cap\lbrace (-1)^jy<0\rbrace)}\\
				&+(-1)^j\frac{\alpha S}{|\Gamma_{0}|}\sum_{i=1}^2(a_j+(-1)^{i+1}c)\left(\frac{1}{S_j^2|\Gamma^{(i,j)}_{a_j,c,S_1}|}+\frac{1}{|\Gamma_{a_j,c,S_1}^{(i,j)}|^3c^2}\right)||f||^2_{L^2(\Gamma_{0}^{(i,j)})},
			\end{align*}
			\item 
			\begin{align*}
				&\hat{h}^{d_c,{d_{S_1}}}_{\alpha,a_1,a_2,c,S_1}[f]=\frac{4S}{c_0S_j^3}\sum_{j=1}^{2}\left\langle(-1)^j\partial_2f,(-1)^j{a_jc_0}\partial_1f+\frac{cS}{c_0}\partial_2f\right\rangle_{L^2(\Omega_{0}\cap\lbrace (-1)y^j<0\rbrace)}\\
				+&\frac{\alpha S}{|\Gamma_{0}|}\sum_{i,j}(-1)^j\left(\frac{\frac{S_j^2}{c^3}+(-1)^{i+1}a_j+c}{S_j^2|\Gamma^{(i,j)}_{a_j,c,S_1}|}+\frac{-\frac{S_j^2}{c^3}+(-1)^{i+1}a_j+c}{|\Gamma_{a_j,c,S_1}^{(i,j)}|^3c^2}\right)||f||^2_{L^2(\Gamma_{0}^{(i,j)})},
			\end{align*}
		\end{enumerate}
\end{lem}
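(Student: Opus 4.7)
The proof is purely computational: every term in the statement arises from differentiating the building blocks of $\hat h_{\alpha,a_1,a_2,c,S_1}$ (given in (\ref{dd1})-(\ref{dd3})) twice with respect to the relevant parameters and collecting. My plan is to set up the two ingredient derivatives once and for all, then read off each of (I)-(VII) by substitution.

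First, I would record the explicit formulas for $\mathscr L^{-1}_{a_1,a_2,c,S_1}$ on $\Omega_0^{\pm}$ and extract the columns $\partial_1\mathscr L^{-1}_{a_1,a_2,c,S_1}=(c_0/c,0)$ on both halves, together with $\partial_2\mathscr L^{-1}_{a_1,a_2,c,S_1}=(-a_1c_0/S_1,\,cS/(c_0S_1))$ on $\Omega_0^{+}$ and $(a_2c_0/S_2,\,cS/(c_0S_2))$ on $\Omega_0^{-}$. I then take all first partials with respect to $v\in\{a_1,a_2,c,S_1\}$ and, most importantly, all second mixed partials. Many of these vanish: e.g.\ $\partial_{a_j}\partial_{a_k}(\partial_i\mathscr L^{-1})=0$ for $j\neq k$, $\partial_{a_j}^2(\partial_i\mathscr L^{-1})=0$, etc. Parallel to this, for the boundary terms I would tabulate $\partial_v|\Gamma_{a_j,c,S_1}^{(i,j)}|$ and $\partial_v(1/S_j)$ once, using
\[
|\Gamma_{a_j,c,S_1}^{(i,j)}|=\sqrt{\tfrac{S_j^2}{c^2}+(a_j+(-1)^{i+1}c)^2},
\]
and then differentiate again to obtain the coefficients in front of $\|f\|^2_{L^2(\Gamma_0^{(i,j)})}$.

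With these tables in hand, each case is a mechanical substitution into the three-piece formula (\ref{dd1}). For (I), only the $\partial_2$-column depends on $a_j$ and only on the corresponding half-plane, which immediately gives both the bulk term $2(c_0^2/S_j^2)\|\partial_1 f\|^2_{L^2(\Omega_0\cap\{(-1)^jy<0\})}$ and the boundary sum. Case (V) is the key simplification: since the $a_1$-dependence of $\mathscr L^{-1}$ lives solely on $\Omega_0^{+}$ and the $a_2$-dependence solely on $\Omega_0^{-}$, the mixed partial $\partial_{a_1}\partial_{a_2}(\partial_i\mathscr L^{-1})$ vanishes identically, and the boundary edges $|\Gamma_{a_j,c,S_1}^{(i,j)}|$ depend on only one of the $a_j$, so every term in (\ref{dd2})-(\ref{dd3}) is zero. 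The remaining mixed cases (IV), (VI), (VII) are handled in the same way, keeping track of which half-plane and which edge index is active so the appropriate characteristic functions $\bchi_{(-1)^jy<0}$ appear; the $(-1)^j$ and $(-1)^{i+1}$ factors in the statement come directly from differentiating the sign patterns in $\mathscr L^{-1}$ and in $(a_j+(-1)^{i+1}c)$ respectively.

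The only real obstacle is bookkeeping: there are four parameters, two halves $\Omega_0^\pm$, two triangle labels $j=1,2$, and two edge labels $i=1,2$, so one must carefully separate which derivatives land on which column of $\mathscr L^{-1}$ and which half-plane, and likewise for each of the four edge terms. Once the tables of first and second partials are assembled, no case requires anything beyond substitution and simplification using $\partial_c(S_j^2/c^2)=-2S_j^2/c^3$ and the derivative of the square root. Hence the proof consists of invoking (\ref{dd1})-(\ref{dd3}) and verifying each identity term-by-term, which is exactly what is asserted by the one-line proof ``via direct computation.''
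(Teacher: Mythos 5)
Your proposal matches the paper's proof, which is simply ``via direct computation, using the formulae (\ref{dd1}--\ref{dd3})''; your tabulation of the first and second partials of $\partial_i\mathscr{L}^{-1}_{a_1,a_2,c,S_1}$ and of $|\Gamma^{(i,j)}_{a_j,c,S_1}|/S_j$, followed by term-by-term substitution, is exactly the intended argument. Your observation that the $a_1$- and $a_2$-dependence live on disjoint half-planes (giving (V) immediately) is the same mechanism the paper relies on, and your spot-check of (I) correctly reproduces the stated bulk term (with $S_j^2$ where the paper has the typo $S_1^2$).
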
                                                                  \begin{proof}
	Via direct computation, using the formulae  (\ref{dd1}-\ref{dd3}). 
\end{proof}                                            

Next, the preceding lemmas applied in the case $f=\psi_{0}$ and $(a_1,a_2,c,S_1) =(0,0,c,S)$ takes the simplified form of the following lemma.
\begin{lem}\label{explicddlam}
	Explicitly we have
	\begin{enumerate}[label=(\Roman*)]
		\item for $j=1,2,$
		\begin{enumerate}[label=(\roman*)]
			\item \begin{align*}
				\hat{h}_{\alpha}^{d_{a_j},d_{a_j}}[\psi_{0}]=\frac{1}{2S}||\nabla\psi_{0}||^2_{L^2(\Omega_0)}+\alpha\frac{1}{8S}||\psi_{0}||^2_{L^2(\partial\Omega_0)},
			\end{align*}
			\item
			\begin{align*}
				\hat{h}_{\alpha}^{d_{c},d_{c}}[\psi_{0}]=\frac{4}{S}||\nabla\psi_{0}||^2_{L^2(\Omega_0)}+\alpha\frac{2}{S}||\psi_{0}||^2_{L^2(\partial\Omega_{0})},
			\end{align*}
			\item 
			\begin{align*}
				\hat{h}_\alpha^{{d_{S_1}},{d_{S_1}}}[\psi_0]=\frac{3}{S^2}||\nabla\psi_{0}||^2_{L^2(\Omega_0)}+\frac{5\alpha}{4S^2}||\psi_{0}||^2_{L^2(\partial\Omega_{0})},
			\end{align*}
			\item for $j=1,2,$\begin{align*}
				\hat{h}_{\alpha}^{d_{a_j},d_{c}}[\psi_{0}]=0,
			\end{align*}
			\item \begin{align*}
				\hat{h}_{\alpha}^{d_{a_1},d_{a_2}}[\psi_{0}]=0,
			\end{align*}
			\item for $j=1,2$
			\begin{align*}
				\hat{h}_{\alpha}^{d_{a_j},d_{S_1}}[\psi_{0}]=0,
			\end{align*}
			\item \begin{align*}
				\hat{h}^{d_c,{d_{S_1}}}_{\alpha}[\psi_{0}]=0,
			\end{align*}
		\end{enumerate}
		\item 
		
		\begin{enumerate}
			\item for $j=1,2,$ \begin{align*}
				\hat{h}_{\alpha}^{d_{a_j}}[\psi_{0},\phi]=&(-1)^j\frac{c_0}{S}\left(\langle\partial_1\psi_{0},\partial_2\phi\rangle_{L^2(\Omega_0\cap\lbrace (-1)^jy<0\rbrace)}+\langle\partial_1\phi,\partial_2\psi_{0}\rangle_{L^2(\Omega_0\cap\lbrace (-1)^jy<0\rbrace)}\right)\\
				+&\alpha\frac{c_0}{|\Gamma_{0}|^2}\sum_{i=1}^{2}(-1)^{i+1}\langle\psi_{0},\phi\rangle_{L^2(\Gamma_{0}^{(i,j)})},
			\end{align*} 
			\item  
			\begin{align*}
				\hat{h}_{\alpha}^{d_{c}}[\psi_{0},\phi]=      \frac{2}{c_0}\left(\langle\partial_2\psi_{0},\partial_2\phi\rangle_{L^2(\Omega_0)}-\langle\partial_1\psi_{0},\partial_1\phi\rangle_{L^2(\Omega_0)}   \right)      ,               
			\end{align*}
			\item
			\begin{align*}
				\hat{h}_{\alpha}^{d_{S_1}}[\psi_{0},\phi]=&\frac{2}{S}\left(\langle\partial_2\psi_{0},\partial_2\phi\rangle_{L^2(\Omega_{0}\cap \lbrace y<0\rbrace)}-\langle\partial_2\psi_{0},\partial_2\phi\rangle_{L^2(\Omega_{0}\cap \lbrace y>0\rbrace)}\right)\\
				+&\frac{\alpha S}{|\Gamma_{0}|}\left(\frac{1}{|\Gamma_{0}|c_0^2}-\frac{|\Gamma_{0}|}{S^2}\right)\left(\sum_{i,j}(-1)^{j+1}\langle\psi_{0},\phi\rangle_{L^2(\Gamma_{0}^{(i,j)})}\right).
			\end{align*}
		\end{enumerate}
	\end{enumerate}
\end{lem}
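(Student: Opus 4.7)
The plan is to derive Lemma \ref{explicddlam} from Lemmas \ref{hd1v2} and \ref{2nddif} by direct substitution of the symmetric parameters $(a_1,a_2,c,S_1)=(0,0,c_0,S)$, followed by systematic application of the symmetry identities from Corollary \ref{symprop}. No new ideas are needed; the content is purely bookkeeping, so the proof in the appendix will simply state this and invoke the previous results.

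Concretely, I would record four simplifications that occur at the symmetric point and make all the stated identities fall out. First, $S_1=S_2=S$ and, since $c_0^2=S$, one has $|\Gamma^{(i,j)}_{0,c_0,S}|=\sqrt{S^2/c_0^2+c_0^2}=\sqrt{2S}=|\Gamma_0|$, so every ratio $|\Gamma^{(i,j)}_{a_j,c,S_1}|/S_j$ collapses to $1/c_0$. Second, the curious combination $-S_j^2/c^3+(-1)^{i+1}a_j+c$ that appears throughout the $c$-derivative formulas equals $-c_0+c_0=0$, which is what causes $\hat{h}^{d_c}_\alpha[\psi_0,\phi]$ to lose its boundary piece in (II)(b), and which is also responsible for several cross-derivative vanishings. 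Third, the symmetry properties $\langle\partial_1\psi_0,\partial_2\psi_0\rangle_{L^2(\Omega_0^\pm)}=0$, $\|\partial_1\psi_0\|^2_{L^2(\Omega_0^\pm)}=\tfrac12\|\partial_1\psi_0\|^2_{L^2(\Omega_0)}$ (and similarly for $\partial_2$), and $\|\partial_1\psi_0\|^2_{L^2(\Omega_0)}=\|\partial_2\psi_0\|^2_{L^2(\Omega_0)}=\tfrac12\|\nabla\psi_0\|^2_{L^2(\Omega_0)}$ allow one to rewrite each kinetic contribution as a scalar multiple of $\|\nabla\psi_0\|^2_{L^2(\Omega_0)}$. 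Fourth, the identity $\|\psi_0\|^2_{L^2(\Gamma_0^{(i,j)})}=\tfrac14\|\psi_0\|^2_{L^2(\partial\Omega_0)}$ converts every boundary term into a multiple of $\|\psi_0\|^2_{L^2(\partial\Omega_0)}$.

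For the first-derivative identities in part (II), I would substitute into Lemma \ref{hd1v2} and use only the first two simplifications, since the symmetry of $\psi_0$ is not yet invoked (the forms still contain a general test function $\phi$). For the second-derivative identities in part (I)(i)--(iii), the strategy is to substitute in Lemma \ref{2nddif}, use the vanishing of the $\langle\partial_1\psi_0,\partial_2\psi_0\rangle$ cross terms on each half, and collect coefficients; arithmetic then gives the rational numbers $1/(2S), 1/(8S), 4/S, 2/S, 3/S^2, 5/(4S^2)$ appearing on the right. For the vanishing statements (I)(iv)--(vii), the $x$-evenness or $y$-evenness of $\psi_0$ (and hence of $\partial_1\psi_0$, $\partial_2\psi_0$ in the relevant variables) makes the signed sums over the upper and lower halves, or over the $i=1$ and $i=2$ boundary pieces, cancel in pairs. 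For example, $\hat{h}^{d_{a_j},d_c}_\alpha[\psi_0]$ contains $\langle\partial_1\psi_0,\partial_2\psi_0\rangle_{L^2(\Omega_0\cap\{(-1)^jy<0\})}$, which is zero by (V) of Corollary \ref{symprop}, while its boundary piece carries the factor $-S_j^2/c^3+(-1)^{i+1}a_j+c$ that vanishes.

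The only mildly delicate step is assembling the coefficients in (I)(iii) for $\hat{h}^{d_{S_1},d_{S_1}}_\alpha[\psi_0]$, because two of the three summands in Lemma \ref{2nddif}(III) survive at the symmetric point and must be combined carefully. Here I would first use the fact that $a_j=0$ kills the $\partial_1$ contribution, reducing the first two terms to a multiple of $\|\partial_2\psi_0\|^2$ on each half, then use the equi-partition $\|\partial_2\psi_0\|^2_{L^2(\Omega_0^\pm)}=\tfrac14\|\nabla\psi_0\|^2_{L^2(\Omega_0)}$ to conclude. The boundary coefficient then follows by substituting $|\Gamma^{(i,j)}_{a_j,c,S_1}|=\sqrt{2S}$, $S_j=S$, and $c=c_0=\sqrt{S}$ and summing over the four edges. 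I anticipate this is the most error-prone part, so in writing it up I would display the intermediate coefficient $-\frac{1}{c^4 c^2|\Gamma|S}-\frac{1}{c^2 |\Gamma| S}+\frac{2|\Gamma|}{S^3}$ before evaluating it at the symmetric point to obtain $5/(4S^2)$ after multiplication by $\alpha S/|\Gamma_0|$ and summation.
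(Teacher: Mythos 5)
Your proposal is correct and follows essentially the same route as the paper's proof of Lemma \ref{explicddlam}: substitute $(a_1,a_2,c,S_1)=(0,0,c_0,S)$ into Lemmas \ref{hd1v2} and \ref{2nddif} and then invoke the symmetry properties of Corollary \ref{symprop}, which are exactly the four simplifications you isolate (the vanishing of $-S_j^2/c^3+(-1)^{i+1}a_j+c$, the cross-term and equipartition identities, and the equality of the four boundary norms). Two small slips in your narrative do not affect the method but should be fixed in a write-up: at the symmetric point $|\Gamma_0|/S=\sqrt{2}/c_0$ rather than $1/c_0$, and the first summand of your displayed intermediate coefficient for (I)(iii) should read $-S_j/(|\Gamma^{(i,j)}_{a_j,c,S_1}|^3c^4)$ as in Lemma \ref{2nddif}(III), which is what actually evaluates (after multiplying by $\alpha S/|\Gamma_0|$ and summing over the four edges) to the stated $5\alpha/(4S^2)$.
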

\begin{proof}
	Throughout we apply Lemmas \ref{hd1v2} and \ref{2nddif}.\\  
	\begin{enumerate}[label=(\Roman*)]
		\item                                             \begin{enumerate}[label=(\roman*)]
			\item We have
			\begin{align*}
				\hat{h}_{\alpha}^{d_{a_j},d_{a_j}}[\psi_{0}]=&2\frac{c_0^2}{S^2}||\partial_1\psi_{0}||^2_{L^2(\Omega_0\cap\lbrace (-1)^jy<0\rbrace)}\\
				&+\frac{\alpha}{2}\left(\frac{1}{|\Gamma_{0}|^2}-\frac{c^2}{|\Gamma_{0}|^4}\right)||\psi_{0}||^2_{\partial\Omega_0}\\
				=&\frac{c_0^2}{2S^2}||\nabla\psi_{0}||^2_{L^2(\Omega_0)}+\frac{\alpha}{2}\left(\frac{1}{2S}-\frac{S}{4S^2}\right)||\psi_{0}||^2_{L^2(\partial\Omega_0)},\\
				=&\frac{1}{2S}||\nabla\psi_{0}||^2_{L^2(\Omega_0)}+\alpha\frac{1}{8S}||\psi_{0}||^2_{L^2(\partial\Omega_0)},
			\end{align*}
			where we have made use of the symmetry properties $||\partial_1\psi_{0}||^2_{L^2(\Omega_0\cap\lbrace (-1)^jy<0\rbrace)}=\frac{1}{2}||\partial_1\psi_{0}||^2_{L^2(\Omega_0)}=\frac{1}{4}||\nabla\psi_{0}||^2_{L^2(\Omega_0)},$ and $||\psi_0||_{L^2(\Gamma_{0}^{i,j})}$ is independent of $i,j$ from Corollary \ref{symprop}.\\
			
			\item       We have
			\begin{align*}
				\hat{h}_{\alpha}^{d_{c},d_{c}}[\psi_{0}]=&\frac{6}{c_0^2}||\partial_1\psi_{0}||^2_{L^2(\Omega_0)}+\frac{2}{c_0^2}||\partial_2\psi_{0}||^2_{L^2(\Omega_{0})}\\
				+&\frac{\alpha}{|\Gamma_{0}|}\left(\frac{3\frac{S^2}{c_0^4}+1}{|\Gamma_{0}|}-\frac{\left(-\frac{S^2}{c_0^3}+c_0\right)^2}{|\Gamma_{0}|^3}\right)||\psi_{0}||^2_{L^2(\partial\Omega_{0})}\\
				=&\frac{4}{S}||\nabla\psi_{0}||^2_{L^2(\Omega_0)}+\alpha\frac{2}{S}||\psi_{0}||^2_{L^2(\partial\Omega_{0})},
			\end{align*}
			where we have made use of the symmetry property $||\partial_1\psi_{0}||^2_{L^2(\Omega_{0})}=||\partial_2\psi_{0}||^2_{L^2(\Omega_{0})}=\frac{1}{2}||\nabla\psi_{0}||^2_{L^2(\Omega_{0})}$  from Corollary \ref{symprop}.\\                               \item      We have
			\begin{align*}
				\hat{h}_{\alpha}^{d_{S_1},d_{S_1}}[\psi_{0}]=&\frac{6}{S^2}||\partial_2\psi_{0}||^2_{L^2(\Omega_{0})}\\
				+&\frac{\alpha S}{|\Gamma_{0}|}\left(-\frac{S}{|\Gamma_{0}|^3c_0^4}-\frac{1}{c_0^2|\Gamma_{0}|S}+\frac{2|\Gamma_{0}|}{S^3}\right)||\psi_{0}||^2_{L^2(\partial\Omega_{0})}\\
				=&       \frac{3}{S^2}||\nabla\psi_{0}||^2_{L^2(\Omega_0)}+\frac{5\alpha}{4S^2}||\psi_{0}||^2_{L^2(\partial\Omega_{0})}                                               \end{align*}
			where we have made use of the symmetry properties $||\partial_1\psi_{0}||^2_{L^2(\Omega_0\cap\lbrace y>0\rbrace)}=||\partial_1\psi_{0}||^2_{L^2(\Omega_0\cap\lbrace y<0\rbrace)}=\frac{1}{2}||\partial_1\psi_{0}||^2_{L^2(\Omega_0)}=\frac{1}{4}||\nabla\psi_{0}||^2_{L^2(\Omega_0)},$ from Corollary \ref{symprop}.\\                            
			\item   We have
			\begin{align*}
				\hat{h}_{\alpha,a_1,a_2,c}^{d_{a_j},d_{c}}[\psi_{0}]=&(-1)^j\frac{2}{S}\langle\partial_1\psi_{0},\partial_2\psi_{0}\rangle_{L^2(\Omega_0\cap\lbrace (-1)^jy<0\rbrace)}\\
				+&\frac{\alpha}{|\Gamma_0|}\sum_{i=1}^2\left((-1)^{i+1}\frac{1}{|\Gamma_{0}|}-\frac{(-1)^{i+1}c_0(-\frac{S^2}{c_0^3}+c_0)}{|\Gamma_{0}|^3}\right)||\psi_{0}||_{\Gamma_{0}^{(i,j)}}^2\\
				=&0,
			\end{align*}
			where we have made use of the fact that $S=c_0^2$ and so $c_0-\frac{S^2}{c_0^3}=0,$ alongside the symmetry properties $\langle\partial_1\psi_{0},\partial_2 \psi_0\rangle_{L^2(\Omega_{0}\cap\lbrace (-1)^jy<0\rbrace)}=0$ and $||\psi_0||_{L^2(\Gamma_{0}^{(1,1)})}=||\psi_0||_{L^2(\Gamma_{0}^{(2,1)})}$ from Corollary \ref{symprop}.\\
			\item Trivial.\\                                                                                                                               
			
		\item We have
		\begin{align*}
			\hat{h}_{\alpha}^{d_{a_j},d_{S_1}}[\psi_{0}]=&4\frac{c_0}{S^2}\langle\partial_1\psi_{0},\partial_2\psi_{0}\rangle_{L^2(\Omega_{0}\cap\lbrace (-1)^jy<0\rbrace)}\\
			+&(-1)^j\frac{\alpha S}{|\Gamma_{0}^{(i)}|}c_0\left(\frac{1}{S^2|\Gamma_{0}|}+\frac{1}{|\Gamma_{0}|^3c_0^2}\right)\sum_{i=1}^2(-1)^{i+1}||\psi_{0}||^2_{L^2(\Gamma_{0}^{(i,j)})}\\
			=&0,
		\end{align*}
		where we have used the symmetry properties $\langle\partial_1\psi_{0},\partial_2 \psi_0\rangle_{L^2(\Omega_{0}\cap\lbrace (-1)^jy<0\rbrace)}=0$ and $||\psi_0||_{L^2(\Gamma_{0}^{(1,j)})}=||\psi_0||_{L^2(\Gamma_{0}^{(2,j)})}$ from Corollary \ref{symprop}.\\                                                                            \item We have
		\begin{align*}
			\hat{h}^{d_c,{d_{S_1}}}_{\alpha}[\psi_{0}]=&\frac{4}{c_0S}\left(||\partial_2\psi_{0}||^2_{L^2(\Omega_{0}^-)}-||\partial_2\psi_{0}||^2_{L^2(\Omega_{0}^+)}\right)\\
			+&\frac{\alpha S}{|\Gamma_{0}|}\left(\frac{\frac{S^2}{c_0^3}+c_0}{S^2|\Gamma^{(i)}_{0}|}+\frac{-\frac{S^2}{c_0^3}+c_0}{|\Gamma_{0}^{(i)}|^3c_0^2}\right)\left(\sum_{i,j}(-1)^j||\psi_{0}||^2_{L^2(\Gamma_{0}^{(i,j)})}\right)\\
			=&0.
		\end{align*}                       
		Where we have used the symmetry properties $\langle\partial_1\psi_{0},\partial_2 \psi_0\rangle_{L^2(\Omega_{0}^+)}=\langle\partial_1\psi_{0},\partial_2 \psi_0\rangle_{L^2(\Omega_{0}^-)}=0,$  $||\psi_0||_{L^2(\Gamma_{0}^{(1,j)})}=||\psi_0||_{L^2(\Gamma_{0}^{(2,j)})}$ from Corollary \ref{symprop}.\\
	\end{enumerate}                                                   \item Follows easily from Lemma \ref{hd1v2}.
\end{enumerate} 
\end{proof}                                                     
We now have all the computations required to compute the  $\lambda_{0}^{v_1,v_2}$ terms.     
\bibliography{refs2}
\bibliographystyle{plain}
\end{document}